\renewcommand*\author[1]{%
  \stepcounter{author}%
  \ifnum\c@author=1
    \gdef\@author{#1}%
  \else
    \xdef\@author{\unexpanded\expandafter{\@author\and#1}}%
  \fi
  \csgdef{author@\the\c@author}{#1}}
\newcommand*\email[1]{%
  \csgdef{email@\the\c@author}{#1}}
\newcommand*\address[1]{%
  \csgdef{address@\the\c@author}{#1}}
  \xdef\author@count{\the\c@author}%
\newcommand*\print@authors{%
  \ifnum\c@author>\author@count
  \else
    \print@author{\the\c@author}%
    \advance\c@author by 1
    \expandafter\print@authors
  \fi}
\newcommand*\print@author[1]{%
  \par\medskip
  \noindent
  \begin{tabular}{@{}l@{}}%
    \csuse{author@#1}\\
    \csuse{address@#1}\\
    \textit{e-mail:}
    \texttt{\csuse{email@#1}}
  \end{tabular}
}
\renewcommand*{\title}[2][]{%
  \ifstrempty{#1}{
  \gdef\shorttitle{#2}\gdef\@title{#2}
  }{\gdef\shorttitle{#1}\gdef\@title{#2}%
  }
}
\renewenvironment{abstract}%
  {\usekomafont{abstract}
   \begin{description}[%
        style=unboxed,
        leftmargin=3em,
        labelindent=\leftmargin,
        rightmargin=\leftmargin,
        labelwidth=0pt,        
        parsep=0pt,
        listparindent=\parindent, 
        font=\normalfont\usekomafont{abstracttitle}]
   \item[\abstractname.]}%
  {\end{description}}
\newcommand*{\keywordsname}{Keywords}
\newcommand*{\mscname}[1]{Mathematics Subject Classification (#1)}
\newcommand{\keywords}[1]{\item[\keywordsname.] #1.}  
\newcommand{\subjclass}[2][2010]{%
\item[\mscname{#1}.] #2.}
\newtheorem{introthm}{Theorem}
\newtheorem{thm}{Theorem}[section]
\newtheorem{cor}[thm]{Corollary}
\newtheorem{lemma}[thm]{Lemma}
\newtheorem{question}[thm]{Question}
\newtheorem{problem}[thm]{Problem}
\newtheorem{conjecture}[thm]{Conjecture}
\theoremstyle{definition}
\newtheorem{defi}[thm]{Definition}
\newtheorem{example}[thm]{Example}
\newtheorem{remark}[thm]{Remark}
\newtheorem{algo}[thm]{Algorithm}
\setlist{nosep}
\newlist{enums}{enumerate}{2}  
\setlist[enums,1]{label=\textup{(\alph*)}}
\crefname{enumsi}{Part}{Parts}
\Crefname{enumsi}{Part}{Parts}
\renewcommand{\phi}{\varphi}
\renewcommand{\theta}{\vartheta}
\renewcommand{\geq}{\geqslant}
\renewcommand{\leq}{\leqslant}
\newcommand{\defemph}[1]{\textbf{#1}} 
\newcommand{\nats}{\mathbb{N}}
\newcommand{\ints}{\mathbb{Z}}
\newcommand{\compl}{\mathbb{C}}
\newcommand{\iso}{\cong}    
\newcommand{\dcup}{\mathbin{\mathaccent\cdot{\cup}}}   
\DeclarePairedDelimiter{\abs}{\lvert}{\rvert}
\DeclareMathOperator{\Z}{\mathbf{Z}}         
\DeclareMathOperator{\Irr}{Irr}
\DeclareMathOperator{\Cl}{Cl}
\DeclareMathOperator{\Map}{Maps}
\DeclareMathOperator{\cf}{cf}
\DeclareMathOperator{\Clpt}{ClPt}
\DeclareMathOperator{\Irpt}{IrPt}
\DeclareMathOperator{\PSL}{PSL}
\DeclareMathOperator{\Sz}{Sz}  
\DeclareMathOperator{\UT}{UT}
\DeclareMathOperator{\Aut}{Aut}
\newcommand{\noids}[1]{#1^{\#}}   
\newcommand{\pttf}[1]{\mathcal{#1}}
\newcommand{\sct}[2]{(\pttf{#1},\pttf{#2})}
\newcommand{\px}{\pttf{X}}
\newcommand{\pk}{\pttf{K}}
\begin{document}
\author{Frieder Ladisch}
\title[Supercharacter Theories]{Finding Supercharacter Theories \\
       on Character Tables}
\address{Universität Rostock \\
         Institut für Mathematik\\         
         18051 Rostock\\
         Germany}
\email{frieder.ladisch@uni-rostock.de}


\maketitle

\begin{abstract}
    We describe an easy way how to find supercharacter theories
    for a finite group,
    if its character table is known.
    Namely, 
    we show how an arbitrary partition of the conjugacy classes 
    or of the irreducible characters can be refined to the 
    coarsest partition that belongs to a supercharacter theory.
    Our constructions emphasize the duality between 
    superclasses and supercharacters.
    An algorithm is presented to find all supercharacter theories
    on a given character table.
    The algorithm is used to compute
    the number of supercharacter theories for some 
    nonabelian simple groups with up to 26 conjugacy classes.
\subjclass[2010]{20C15}
\keywords{Characters, Finite groups, Supercharacters,
           Schur rings}
\end{abstract}


%


\section{Introduction}

Supercharacter theories of finite groups were introduced by 
Diaconis and Isaacs~\cite{DiaconisIsaacs08}
as approximations of a group's 
ordinary character theory.
Let $G$ be a finite group
and write $\Irr G$ to denote the set of irreducible
complex characters of $G$.
For any subset $X\subseteq \Irr G$, let
$\sigma_X$ be the character
\[ \sigma_X = \sum_{\chi\in X} \chi(1)\chi. 
\]
Suppose that $\px$ is a partition of $\Irr G$
and $\pk$ is a partition of $G$.
The pair $\sct{X}{K}$
is called
a \defemph{supercharacter theory} of $G$,
\label{def:sct}
if the following conditions hold:
\begin{enums}
\item $\abs{\px}=\abs{\pk}$,
\item the characters $\sigma_X$ ($X\in \px$)
      are constant on the members of $\pk$.      
\end{enums}
The members of $\pk$ are called the 
\defemph{superclasses} of the theory $(\px,\pk)$,
and the characters $\sigma_X$ 
(or certain rational multiples) the 
\defemph{supercharacters}.

We say that $\pk$ is
\defemph{compatible} with $\px$,
if $\sigma_X$ is constant on the members of $\pk$
for every $X \in \px$.
Clearly, for every collection $\px$ 
of subsets of $\Irr G$,
there is a unique coarsest partition  $\pk $ 
of $G$ that is compatible with $\px$:
Namely, 
$\pk$ is the partition of $G$ whose members
are the equivalence classes under the relation 
on $G$ defined by $g \sim h$ if and only if
$\sigma_X(g)=\sigma_X(h)$ for all $X\in \px$.
We write 
\[ \pk = \Clpt(\px) 
\]
for this partition (as in \emph{class partition}).

We always have $\abs{\px} \leq \abs{\Clpt(\px)}$,
and the partition $\px$ belongs to a supercharacter theory
if and only if equality holds. 
Thus in a supercharacter theory, $\px$ determines 
$\pk = \Clpt(\px)$
as the coarsest partition of $G$
that is compatible with $\px$.
This description appears in the paper of Diaconis and Isaacs
\cite[Theorem~2.2~(c)]{DiaconisIsaacs08}.

In this note (in \cref{def:IrPt} below), 
we show how each collection~$\pk$ 
of normal subsets of $G$ determines a partition~$\px$ of $\Irr G$,
thereby defining a map
\[  \pk \mapsto \Irpt(\pk) = \px 
\]
which is in some sense dual to the map $\Clpt$ described above.
With both these maps in hand, 
we can easily characterize partitions belonging to 
supercharacter theories, 
and we also get an easy way to refine an arbitrary partition
of $\Irr G$ or a $G$-invariant partition of $G$
to a partition belonging to a supercharacter
theory.
(A partition $\pk$ of $G$ is called
\defemph{$G$-invariant} if all its members are normal subsets of $G$.)
\begin{introthm}
\label{t:main}
  Let $\pk$ a $G$-invariant partition of $G$, and 
  $\px$ a partition of $\Irr G$.
  \begin{enums}
  \item \label{i:pktopx}
        $\abs{\pk} \leq \abs{ \Irpt(\pk)}$,
        and equality holds if and only if
        $\big( \Irpt(\pk), \pk \big)$ 
        is a supercharacter theory.
  \item \label{i:diaisa22c}
        $\abs{\px} \leq \abs{ \Clpt(\px)}$, 
        and equality holds if and only if
        $ \big( \px, \Clpt(\px) \big)$
        is a supercharacter theory.
  \item \label{i:pkrefine}
        The partition 
        $\Clpt\big( \Irpt( \pk) \big)$ refines $\pk$,
        and these partitions are equal if and only if
        $\big( \Irpt(\pk), \pk \big)$ 
        is a supercharacter theory.
  \item \label{i:pxrefine} The partition 
        $\Irpt\big( \Clpt( \px)\big)$ refines $\px$,
        and these partitions are equal if and only if
        $ \big( \px, \Clpt(\px) \big)$
        is a supercharacter theory.
  \end{enums}
\end{introthm}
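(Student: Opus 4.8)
The plan is to turn the whole statement into linear algebra inside the centre $\Z(\compl G)$ of the group algebra. To a $G$-invariant partition $\pk$ I attach the subspace $A_{\pk}=\operatorname{span}\{\widehat K : K\in\pk\}$, where $\widehat K=\sum_{g\in K}g$, and to a partition $\px$ of $\Irr G$ the subspace $E_{\px}=\operatorname{span}\{e_X : X\in\px\}$, where $e_X=\sum_{\chi\in X}e_\chi$ and $e_\chi$ is the primitive central idempotent of $\compl G$ belonging to $\chi$. As the $\widehat K$ (resp.\ the $e_X$) are linearly independent, $\dim A_{\pk}=\abs{\pk}$ and $\dim E_{\px}=\abs{\px}$. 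The pivotal reformulation I would establish first is that $\pk$ is compatible with $\px$ if and only if $E_{\px}\subseteq A_{\pk}$. This rests on the coefficient identity $e_X=\frac{1}{\abs{G}}\sum_{g}\overline{\sigma_X(g)}\,g$, which shows that $e_X\in A_{\pk}$ precisely when $\sigma_X$ is constant on the members of $\pk$. Combined with the dimension formulas, this gives the clean principle that $\sct{X}{K}$ is a supercharacter theory if and only if $E_{\px}=A_{\pk}$.

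Next I would record the two ``smallest enclosing subspace'' descriptions that make $\Clpt$ and $\Irpt$ exact mirror images. By construction $\Clpt(\px)$ is the coarsest partition of $G$ compatible with $\px$, so every compatible $\pk$ refines it; by the reformulation this means that $A_{\Clpt(\px)}$ is the smallest subspace of the form $A_{\pk}$ containing $E_{\px}$, and in particular $E_{\px}\subseteq A_{\Clpt(\px)}$. Dually, identifying $z\in\Z(\compl G)$ with its tuple of central-character values $(\omega_\chi(z))_{\chi}$ shows $E_{\px}=\{z : \omega_\chi(z)\text{ is constant on each member of }\px\}$; since $\Irpt(\pk)$ (\cref{def:IrPt}) groups the irreducibles by the values $\omega_\chi(\widehat K)$, $K\in\pk$, the space $E_{\Irpt(\pk)}$ is the smallest subspace of the form $E_{\px}$ containing $A_{\pk}$, so $A_{\pk}\subseteq E_{\Irpt(\pk)}$. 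Parts~\ref{i:diaisa22c} and~\ref{i:pktopx} now drop out by comparing dimensions across these inclusions: $\abs{\px}=\dim E_{\px}\le\dim A_{\Clpt(\px)}=\abs{\Clpt(\px)}$ with equality iff $E_{\px}=A_{\Clpt(\px)}$, and symmetrically $\abs{\pk}=\dim A_{\pk}\le\dim E_{\Irpt(\pk)}=\abs{\Irpt(\pk)}$ with equality iff $A_{\pk}=E_{\Irpt(\pk)}$; by the first paragraph each equality is exactly the assertion that the relevant pair is a supercharacter theory. (Part~\ref{i:diaisa22c} is the quoted result of Diaconis--Isaacs, here re-derived so as to run parallel to Part~\ref{i:pktopx}.)

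For Parts~\ref{i:pkrefine} and~\ref{i:pxrefine} I would simply concatenate the two basic inclusions. Taking $\pk=\Clpt(\px)$ in Part~\ref{i:pxrefine} yields the chain $E_{\px}\subseteq A_{\Clpt(\px)}\subseteq E_{\Irpt(\Clpt(\px))}$; since finer partitions correspond to larger subspaces $E_{\bullet}$, the inclusion $E_{\px}\subseteq E_{\Irpt(\Clpt(\px))}$ says precisely that $\Irpt(\Clpt(\px))$ refines $\px$, and the sandwich forces the two partitions to coincide iff $E_{\px}=A_{\Clpt(\px)}$, i.e.\ iff $\big(\px,\Clpt(\px)\big)$ is a supercharacter theory. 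Part~\ref{i:pkrefine} is the mirror argument from $A_{\pk}\subseteq E_{\Irpt(\pk)}\subseteq A_{\Clpt(\Irpt(\pk))}$, using that finer partitions give larger subspaces $A_{\bullet}$. The one genuinely computational point---and the main obstacle---is the reformulation of the first paragraph: I must verify the coefficient identity for $e_X$ and, dually, the eigenvalue description of $E_{\px}$, and then keep careful track of the order-reversing dictionary between partitions (under refinement) and the subspaces $A_{\pk}$, $E_{\px}$ (under inclusion). Once that dictionary is in place, all four parts follow formally from the two inclusions $E_{\px}\subseteq A_{\Clpt(\px)}$ and $A_{\pk}\subseteq E_{\Irpt(\pk)}$.
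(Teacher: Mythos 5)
Your proof is correct, but it is organized along a genuinely different route than the paper's. The paper develops the multiplicative theory: it establishes a bijection between partitions of a finite set and \emph{unital subalgebras} of the function algebra (\cref{l:partt_subalgs}, \cref{c:gensubalg}), applies it in two different ambient algebras --- $\cf(G)$ for $\Clpt$ and $\Z(\compl G)\iso\Map(\Irr G,\compl)$ for $\Irpt$ (\cref{l:ax_elprops}, \cref{l:bk_elprops}) --- and then crosses between the two sides via the Fourier-type expansions of $e_X$ and $\delta_K$ (\cref{l:ab_ba_char}), from which \cref{t:x_sct,t:k_sct} and hence \cref{t:main} follow. You instead work in the single space $\Z(\compl G)$ with the two families of subspaces $A_{\pk}$ and $E_{\px}$, never invoke closure under multiplication, and reduce everything to the dictionary ``$\pk$ compatible with $\px$ $\iff$ $E_{\px}\subseteq A_{\pk}$'' (hence supercharacter theory $\iff$ $E_{\px}=A_{\pk}$), the two minimality statements for $A_{\Clpt(\px)}$ and $E_{\Irpt(\pk)}$, and dimension counting. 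That dictionary is implicitly present in the paper --- it is exactly how the formula $e_X=\frac{1}{\abs{G}}\sum_{g}\overline{\sigma_X(g)}\,g$ is used in the proof of \cref{l:ab_ba_char} --- but you promote it to the centerpiece and thereby bypass the subalgebra machinery of \cref{sec:parttalg} entirely. What each approach buys: yours is leaner and makes the duality between \cref{i:pktopx,i:diaisa22c} (and between \cref{i:pkrefine,i:pxrefine}) completely formal, since both maps become ``smallest enclosing subspace'' operators; the paper's subalgebra formulation is heavier but delivers, along the way, the characterizations ``the span of the $\hat{K}$ (resp.\ $\sigma_X$) is a unital subalgebra'' in \cref{t:x_sct,t:k_sct,t:superch}, which are the Schur-ring connection and are what the later sections (e.g.\ \cref{c:find_superclass} and the discussion of \emph{good} sets in \cref{sec:find_all_scts}) actually rely on. Your argument, as it stands, does not produce those subalgebra characterizations, but for \cref{t:main} alone it is a complete and valid proof.
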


\Cref{i:diaisa22c} is just the result of Diaconis and Isaacs
mentioned before, and is included just to emphasize
duality with~\cref{i:pktopx}.
While it is also known that $\pk$ determines $\px$ when
$(\px,\pk)$ is a supercharacter theory,
our description of $\px$ as $\Irpt(\pk)$ is different from the one
contained in the literature.
These results are proved in \cref{sec:ClPtIrPt}.

The maps $\Clpt$ and $\Irpt$ yield an efficient way to refine a given 
partition of $G$ or of $\Irr G$ to a partition
belonging to a supercharacter theory:
For example,
suppose we are given a $G$-invariant partition~$\pk$ of $G$.
We apply the maps $\Irpt$ and $\Clpt$ repeatedly in turns.
By \cref{i:pktopx,i:diaisa22c} of \cref{t:main},
we have
\[ \abs{\pk} \leq \abs{ \Irpt(\pk)}
   \leq \abs{\Clpt(\Irpt(\pk))}
   \leq \abs{ \Irpt(\Clpt(\Irpt(\pk)))}
   \leq \dotsb \,. 
\]
When two consecutive partitions contain the same number of sets,
then these
two partitions form a supercharacter theory.
Moreover, by \cref{i:pkrefine},
the corresponding superclass partition is the coarsest partition
that refines $\pk$ and belongs to a supercharacter theory.
Computational evidence suggests that most often, 
a small number of steps suffices to reach a supercharacter theory.
Also, both maps $\Clpt$ and $\Irpt$ can be computed easily and 
effectively from the character table.

Similarly, we can start with a partition $\px$ of $\Irr G$
and refine it to a partition of $\Irr G$ 
that belongs to a supercharacter theory.

So when we know the character table of a group $G$,
we can easily decide whether a given nonempty, normal subset $S$
of $G$ is a superclass in some supercharacter theory on $G$:
We apply $\Irpt$ and $\Clpt$ to the partition
$\{S, G\setminus S\}$ of $G$,
until either $S$ is not a member of the resulting partition of $G$,
or we have found a supercharacter theory $(\px,\pk)$ with $S\in \pk$.
In the first case, $S$ can not be a superclass. 
In the second case, we have also found 
the coarsest supercharacter theory in which $S$ is a superclass.

Although the original motivation for developing supercharacter theories
were cases where the full character table is difficult 
to compute, 
there has recently been some interest in understanding 
all possible supercharacter theories on given groups
and character tables.
Only for a few families of finite groups are all possible supercharacter
theories known: 
Leung and Man~\cite{LeungMan96,LeungMan98} classified supercharacter
theories of finite cyclic groups in the language of Schur rings
(cf. Hendrickson's paper~\cite{Hendrickson12}).
Wynn~\cite{Wynn17} and Lamar~\cite{Lamar18} both classified 
supercharacter theories of dihedral groups,
and Lewis and Wynn~\cite{LewisWynn20,Wynn17} considered
supercharacter theories of Camina pairs, and in particular classified 
them for Frobenius groups of order $pq$.
Recently, Burkett and Lewis~\cite{BurkettLewis20pre} began a classification of 
supercharacter theories of $C_p\times C_p$.

Our results suggest an algorithm for determining all
supercharacter theories on a given character table
(\cref{algo:allscts}).
In a first step, this algorithm runs through half of the 
nonempty, normal subsets $S$ of $G\setminus\{1\}$.
As described above, we can at the same time
decide whether such an $S$ can be a superclass,
and compute the coarsest supercharacter theory 
$(\px,\pk)$ with $S\in \pk$, if there is such a theory at all.
In a second step, the algorithm forms meets of the supercharacter 
theories found in the first step, and adds the trivial 
supercharacter theory with class partition
$\pk = \{ \{1\}, G\setminus\{1\}\}$.

The algorithm runs through
$2^{k(G)-2}-1$ subsets of $G$,
and thus the algorithm is applicable only for groups
with few conjugacy classes.
But our algorithm is more efficient than the one suggested by
Hendrickson~\cite{Hendrickson08}, or the modified version by
Burkett, Lamar, Lewis and Wynn~\cite{BurkettLLW17,Lamar18}.
(See the remarks at the end of \cref{sec:find_all_scts} below.)
A further, small improvement is possible by using
automorphisms of the character table,
which we discuss briefly in \cref{sec:tableauts}.

Using the character tables from the character table library
of GAP~\cite{GAP},
we have computed the number of supercharacter theories
for some groups with up to 26 conjugacy classes.
The results are summarized at in \cref{sec:nrscts}.
For a group with about 24 conjugacy classes, 
the algorithm needs a few minutes on a standard desktop computer.

For example, consider the second Janko $J_2$.
This group has 21 conjugacy classes.
It is unfeasible (with current technology)
to run through all the 51\,724\,158\,235\,372 $G$-invariant partitions
of $G$ containing $\{1\}$ as a block,
and check for each partition whether it belongs to a supercharacter 
theory.
But our algorithm has only to run through 
$2^{19}-1 = 524287 $ $G$-invariant subsets of $G\setminus \{1\}$.
(The second step of the algorithm is trivial for $J_2$ since only two
supercharacter theories are found in the first step.)
In about 100 seconds, 
the algorithm finds that the second Janko group $J_2$ has exactly three 
 supercharacter theories.
This confirms a conjecture by A.~R.~Ashrafi and
F.~Koorepazan-Moftakhar
\cite[Conjecture~2.8]{AshrafiMoftakhar16pre}.
According to the published version of this
paper~\cite[remarks before Lemma~3.9]{AshrafiMoftakhar18},
this fact has also been established by 
N. Thiem and J. P. Lamar in unpublished work,
but their algorithm needs about 3 hours.


\section{Partitions and algebras of maps}
\label{sec:parttalg}

We begin with a very general,
but well known and elementary result.
Let $S$ be a finite set and let
$\pk$ and $\pttf{L}$ be partitions of $S$.
Recall that $\pk$ is said to be
\defemph{finer} than $\pttf{L}$, 
written $\pk \preceq \pttf{L}$,
when every block of $\pk$
(that is, every set $K\in \pk$)
is contained in a block of $\pttf{L}$.
One also says that $\pttf{L}$ is 
\defemph{coarser} than $\pk$ in this case.
The set of all partitions of  $S$ 
forms a partial ordered set under the relation of
refinement, and in fact a  lattice.

Now let $F$ be a field and $S$ a finite set.
The set $F^S=\Map(S,F)$ of all functions 
$f\colon S\to F$ forms a commutative $F$-algebra with $1$
with respect
to pointwise addition and multiplication.
Let $\pk$ be a partition of $S$.
The set $\Map_{\pk}(S,F)$ of all functions 
which are constant on the blocks of $\pk$
is a unital subalgebra of $\Map(S,F)$.
Conversely, every subalgebra of $\Map(S,F)$
containing the all-$1$-function has this form.
This is the content of the following 
(well-known) lemma:

\begin{lemma}\label{l:partt_subalgs}
   The map $\pk\mapsto \Map_{\pk}(S,F)$
     defines a bijection between partitions  $\pk$ of $S$
     and unital $F$-subalgebras of\/ $\Map(S,F)$.
     The inverse sends a subalgebra $A$ to the partition 
     corresponding to the equivalence relation on $S$ 
     defined by $s\sim t $ if and only if
     $f(s)=f(t)$ for all $f\in A$.  
     
   The bijections are order reversing with respect to
     refinement and inclusion, that is,
     $\pk \preceq \pttf{L}$ if and only if
     $\Map_{\pk}(S,F) \supseteq \Map_{\pttf{L}}(S,F)$.
\end{lemma}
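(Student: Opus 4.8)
The plan is to exhibit an explicit inverse to the map $\pk\mapsto\Map_{\pk}(S,F)$ and check that the two composites are the identity. That the forward map lands among the unital subalgebras is immediate: the constant functions lie in $\Map_{\pk}(S,F)$, and pointwise sums, products and scalar multiples of functions that are constant on each block of $\pk$ are again constant on each block. For the candidate inverse I would send a unital subalgebra $A$ to the partition $\pk_A$ induced by the equivalence relation $s\sim t\iff f(s)=f(t)$ for all $f\in A$, which is visibly an equivalence relation. It then suffices to prove two things: starting from a partition $\pk$ and forming $A=\Map_{\pk}(S,F)$ recovers $\pk_A=\pk$; and starting from a unital subalgebra $A$ and forming $\pk_A$ recovers $\Map_{\pk_A}(S,F)=A$.

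The first identity is straightforward. Put $A=\Map_{\pk}(S,F)$. If $s,t$ lie in the same block of $\pk$, then every function constant on blocks agrees at $s$ and $t$, so $s\sim t$ and they lie in one block of $\pk_A$. Conversely, if $s,t$ lie in different blocks of $\pk$, the indicator function $e_B$ of the block $B$ containing $s$ is constant on blocks, hence lies in $A$, and separates $s$ from $t$; so $s\not\sim t$. Thus $\pk_A=\pk$. The same separating idea gives one inclusion of the second identity at once: by definition every $f\in A$ is constant on the blocks of $\pk_A$, so $A\subseteq\Map_{\pk_A}(S,F)$.

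The main obstacle is the reverse inclusion $\Map_{\pk_A}(S,F)\subseteq A$, i.e.\ showing that the subalgebra $A$ already contains \emph{every} function constant on the blocks of $\pk_A$. Since any such function is an $F$-linear combination of the block indicators $e_B$ ($B\in\pk_A$), it is enough to produce each $e_B$ inside $A$. Here is where I use that $A$ is a subalgebra (closed under multiplication) containing $1$, together with the finiteness of $S$. Fix a block $B$ and a point $s\in B$. For every other block $B'$, choose $t\in B'$ and $f\in A$ with $f(s)\neq f(t)$; then
\[
   g_{B'} = \frac{f - f(t)\cdot 1}{f(s)-f(t)} \in A
\]
is constant on blocks, equal to $1$ on $B$ and to $0$ on $B'$. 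As $S$ is finite there are only finitely many blocks, so $\prod_{B'\neq B} g_{B'}$ is a well-defined element of $A$; it equals $1$ on $B$ and $0$ on every other block, hence is exactly $e_B$. Thus every $e_B\in A$, which gives $\Map_{\pk_A}(S,F)\subseteq A$ and completes the bijection. (This is a finite, commutative-algebra avatar of the Stone--Weierstrass separation argument.)

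Finally, I would prove the order-reversing statement directly. If $\pk\preceq\pttf{L}$, every block of $\pk$ sits inside a block of $\pttf{L}$, so any function constant on the blocks of $\pttf{L}$ is a fortiori constant on the blocks of $\pk$, giving $\Map_{\pttf{L}}(S,F)\subseteq\Map_{\pk}(S,F)$. For the converse, if $\pk\not\preceq\pttf{L}$ there is a block of $\pk$ meeting two distinct blocks of $\pttf{L}$; the indicator of one of those $\pttf{L}$-blocks lies in $\Map_{\pttf{L}}(S,F)$ but not in $\Map_{\pk}(S,F)$, so the inclusion fails. This yields $\pk\preceq\pttf{L}\iff\Map_{\pk}(S,F)\supseteq\Map_{\pttf{L}}(S,F)$.
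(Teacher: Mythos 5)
Your proof is correct and follows essentially the same route as the paper's: both establish the bijection by constructing the block indicator functions inside $A$ via the normalized separating functions $\bigl(f - f(t)\cdot 1\bigr)/\bigl(f(s)-f(t)\bigr)$ and multiplying finitely many of them (you pick one representative per block, the paper one function per point of $S\setminus K$, which is an immaterial difference). You also write out the order-reversing claim explicitly, which the paper leaves as an easy verification; nothing is missing.
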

\begin{proof}
  It is clear that
  $\pk \mapsto \Map_{\pk}(S,F) \mapsto \pk$,
  because  for $K\in \pk$,
  the characteristic function $\delta_K$ 
  with $\delta_K(s)=1$ if $s\in K$ and $\delta_K(s)=0$ else
  is contained in $\Map_{\pk}(S,F)$.

  For the sake of completeness, we also prove 
  the converse, although all this is well known.
  Let $A\subseteq \Map(S,F)$ be a $F$-subalgebra
  containing $1$ (the all-$1$-function),
  and let $\pk$ be the partition
  whose members are equivalence classes
  under the relation
  on $S$ defined by $s \sim t$ if and only if
  $f(s)= f(t)$ for all $f \in A$.
  Obviously, $A\subseteq \Map_{\pk}(S,F)$.
  
  Let $K\in \pk$ and fix $s\in K$.
  For every $t\in S\setminus K$, there is a 
  function $f\in A$ such that $f(s)\neq f(t)$.
  Then the function 
  $g_t = \big(f(s)-f(t)\big)^{-1} \big( f - f(t)\cdot 1 \big)$ 
  is also in $A$ and we have
  $g_t(s)=1$  and $g_t(t) = 0$.
  Since $g_t$ is constant on $K$, we have $g_t(x)=1$ for 
  all $x\in K$.
  Multiplying all $g_t$ for $t\in S\setminus K$,
  we get the characteristic function $\delta_K$ of $K$.
  It follows that $A$ contains the characteristic functions
  $\delta_K$ for all $K\in \pk$, and so 
  $A=\Map_{\pk}(S,F)$.
  
  The last statement of the lemma
  is easy to verify.
\end{proof}

\begin{cor}\label{c:gensubalg}
  Let $B \subseteq \Map(S,F)$ 
  be a set of maps.
  The unital subalgebra of $\Map(S,F)$ generated by
  $B$ is $\Map_{\pk}(S,F)$,
  where $\pk$ is the partition corresponding
  to the equivalence relation on $S$ defined by
  $s\sim t$ if and only if $b(s)=b(t)$
  for all $b\in B$.  
\end{cor}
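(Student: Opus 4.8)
The plan is to run the generated subalgebra through \cref{l:partt_subalgs}. Let $A$ denote the unital subalgebra of $\Map(S,F)$ generated by $B$. By \cref{l:partt_subalgs}, $A$ is of the form $\Map_{\pttf{L}}(S,F)$ for a unique partition $\pttf{L}$, and the inverse bijection identifies $\pttf{L}$ as the partition whose blocks are the classes of the relation defined by $f(s)=f(t)$ for all $f\in A$. Everything then reduces to showing that $\pttf{L}$ equals the partition $\pk$ of the statement; equivalently, that the equivalence relation determined by \emph{all} of $A$ is the same as the one determined by the generating set $B$ alone.

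To compare the two relations, I would dispatch the trivial direction first: since $B\subseteq A$, any $s,t$ that are not separated by $A$ are a fortiori not separated by $B$, so the $\pttf{L}$-relation implies the $\pk$-relation. The content is in the converse, and this is where the word \emph{generated} does the work. Every element of $A$ is an $F$-linear combination of finite products of members of $B$ (with the empty product read as the all-$1$-function). Hence if $b(s)=b(t)$ holds for every $b\in B$, then each such product, and therefore every $F$-linear combination of them, agrees at $s$ and at $t$; that is, $f(s)=f(t)$ for all $f\in A$. The two relations coincide, so $\pttf{L}=\pk$ and $A=\Map_{\pk}(S,F)$.

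I do not expect a genuine obstacle: the only points needing care are the bookkeeping of which relation refines which and the standard description of $A$ as the set of polynomial expressions in the elements of $B$. If one prefers to sidestep even this, there is a purely set-theoretic route using the order-reversing half of \cref{l:partt_subalgs}: the inclusion $B\subseteq \Map_{\pk}(S,F)$, which is immediate from the definition of $\pk$, forces $A\subseteq \Map_{\pk}(S,F)$ because $\Map_{\pk}(S,F)$ is a unital subalgebra and $A$ is the smallest one containing $B$; conversely, $B\subseteq A$ shows $\pttf{L}\preceq \pk$, and order-reversal yields $\Map_{\pk}(S,F)\subseteq \Map_{\pttf{L}}(S,F)=A$. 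I would present the polynomial argument, since it makes the role of the generating set most transparent.
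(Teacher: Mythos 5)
Your proposal is correct and takes essentially the same approach as the paper: the paper states this as an immediate consequence of \cref{l:partt_subalgs} and gives no written proof, and your argument (applying the lemma to the generated subalgebra $A$ and then checking that the equivalence relation determined by all of $A$ coincides with the one determined by the generating set $B$, via the polynomial description of $A$ or via order-reversal and minimality) is exactly the natural filling-in of that omitted step.
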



\section{Class and character partitions}
\label{sec:ClPtIrPt}

First we apply the results of the last section to the algebra
of maps $\Map(G,\compl)$ from a finite group~$G$ into 
the field of complex numbers~$\compl$.
Actually everything takes place in the subalgebra
of class functions $\cf(G)$.
These are the functions $G\to \compl$ that are constant on
conjugacy classes.

For $X\subseteq \Irr G$, set
\[ \sigma_X = \sum_{\chi\in X} \chi(1)\chi \]
as in the introduction.

\begin{defi}
For a set $\px$ consisting of subsets of $\Irr G$, 
let
$\Clpt(\px)$ 
be the partition of $G$ whose members are the equivalence classes under
the relation~$\sim_{\px}$ on $G$ defined by 
$g \sim_{\px} h$ if and only if
$\sigma_X(g)=\sigma_X(h)$ for all $X\in \px$.
\end{defi}

Recall from the introduction that we call a partition 
$\pk$ of $G$ 
\defemph{compatible} with $\px$,
if $\sigma_X$ is constant on the members of $\pk$
for every $X \in \px$.

\begin{lemma}\label{l:ax_elprops}
  Let $\px$ be a collection of subsets of $\Irr G$.
  \begin{enums}
  \item \label{i:ax_coarsest}
         $\Clpt(\px)$ is the unique coarsest partition of
  $G$ that is compatible with $\px$.   
  \item \label{i:ax_subalg}
        $\Map_{\Clpt(\px)}(G,\compl)$ 
        is the unital subalgebra of $\cf(G)$
        generated by the characters $\sigma_X$ for $X\in \px$.
  \end{enums}
  If $\px$ is a partition of $\Irr G$, then
  the following hold:
  \begin{enums}[resume]
  \item \label{i:ax_grow}
        $\abs{\px} \leq \abs{\Clpt(\px)}$.
  \item \label{i:ax_one} $\{1\} \in \Clpt(\px)$.
  \item \label{i:ax_refine} When $\pttf{Y}$ is another partition of
        $\Irr G$ with $\px \preceq \pttf{Y}$,
        then $\Clpt(\px) \preceq \Clpt(\pttf{Y})$.
  \end{enums}
\end{lemma}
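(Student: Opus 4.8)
My plan is to derive parts \cref{i:ax_coarsest} and \cref{i:ax_subalg} directly from the general correspondence of \cref{sec:parttalg}, and then to read off \cref{i:ax_grow,i:ax_one,i:ax_refine} with a little character theory layered on top.

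First I would apply \cref{c:gensubalg} with $S=G$, $F=\compl$, and the generating set $B=\{\sigma_X : X\in\px\}$. Because every $\sigma_X$ is a class function, the equivalence relation appearing in that corollary is exactly the relation $\sim_{\px}$ defining $\Clpt(\px)$; hence the corollary identifies the unital subalgebra generated by the $\sigma_X$ as $\Map_{\Clpt(\px)}(G,\compl)$, which is \cref{i:ax_subalg} (noting that this generated subalgebra already lies inside $\cf(G)$, so it does not matter whether one generates inside $\cf(G)$ or inside $\Map(G,\compl)$). For \cref{i:ax_coarsest} I would observe that $\Clpt(\px)$ is compatible with $\px$ by its very definition, while for any compatible partition $\pk$ the fact that each $\sigma_X$ is constant on blocks forces $g\sim_{\px}h$ whenever $g,h$ share a block, so that $\pk\preceq\Clpt(\px)$; this makes $\Clpt(\px)$ the unique coarsest compatible partition. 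Equivalently, one can phrase the whole thing through the order-reversing bijection of \cref{l:partt_subalgs}.

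Now assume $\px$ is a partition of $\Irr G$. For \cref{i:ax_grow} the key input is that the $\sigma_X$ with $X\in\px$ are linearly independent: the blocks $X$ are pairwise disjoint, $\Irr G$ is a linearly independent subset of $\cf(G)$, and the coefficients $\chi(1)$ are positive. These $\abs{\px}$ independent functions all lie in $\Map_{\Clpt(\px)}(G,\compl)$, a space whose dimension over $\compl$ equals the number of blocks $\abs{\Clpt(\px)}$, which gives $\abs{\px}\leq\abs{\Clpt(\px)}$. For \cref{i:ax_one} I would use that, since $\px$ partitions all of $\Irr G$, $\sum_{X\in\px}\sigma_X=\sum_{\chi\in\Irr G}\chi(1)\chi=\rho$ is the regular character, which equals $\abs{G}$ at $1$ and $0$ elsewhere; if some $g\neq 1$ satisfied $g\sim_{\px}1$, summing the equalities $\sigma_X(g)=\sigma_X(1)$ over all $X\in\px$ would yield $0=\rho(g)=\rho(1)=\abs{G}$, a contradiction, so $\{1\}$ is a singleton block. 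Finally \cref{i:ax_refine} is immediate from the definition: if $\px\preceq\pttf{Y}$ then each $Y\in\pttf{Y}$ is a union of blocks of $\px$, whence $\sigma_Y=\sum_{X\subseteq Y}\sigma_X$ (sum over $X\in\px$ with $X\subseteq Y$); therefore $g\sim_{\px}h$ implies $\sigma_Y(g)=\sigma_Y(h)$ for all $Y$, i.e.\ $g\sim_{\pttf{Y}}h$, which is exactly $\Clpt(\px)\preceq\Clpt(\pttf{Y})$.

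The genuinely new content beyond \cref{sec:parttalg} is concentrated in \cref{i:ax_grow,i:ax_one}; everything else is bookkeeping with the partition--subalgebra dictionary. I expect the only real (and minor) obstacle to be making the dimension count in \cref{i:ax_grow} airtight — confirming that $\dim_{\compl}\Map_{\pk}(G,\compl)=\abs{\pk}$ and that linear independence of the $\sigma_X$ genuinely transfers into the stated inequality — together with the observation in \cref{i:ax_one} that it is crucial for $\px$ to partition \emph{all} of $\Irr G$ (not merely a subcollection), since this is precisely what makes the regular character appear.
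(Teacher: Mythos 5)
Your proof is correct and follows essentially the same route as the paper's: \cref{i:ax_coarsest} directly from the definition, \cref{i:ax_subalg} from \cref{c:gensubalg}, \cref{i:ax_grow} via linear independence of the $\sigma_X$ together with the dimension count $\dim \Map_{\Clpt(\px)}(G,\compl)=\abs{\Clpt(\px)}$, \cref{i:ax_one} via the regular character $\sum_{X\in\px}\sigma_X$, and \cref{i:ax_refine} by writing each $\sigma_Y$ as a sum of the corresponding $\sigma_X$'s. The only difference is that you spell out details (the contradiction $0=\abs{G}$ in \cref{i:ax_one}, the independence argument in \cref{i:ax_grow}) that the paper leaves implicit.
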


As explained in the introduction,
supercharacter theories are characterized by 
equality in~\ref{i:ax_grow}. 
The results of \cref{l:ax_elprops} are well known 
and recorded here 
for convenient reference, but also for motivation of the dual
results to follow.

\begin{proof}[Proof of \cref{l:ax_elprops}] 
  \Cref{i:ax_coarsest} just rephrases the definition. 
  \Cref{i:ax_subalg}
  is immediate from \cref{c:gensubalg}.
  If $\px$ is a partition,
  then the characters $\sigma_X$ are linearly independent.
  Thus 
  $ \abs{\px} 
    \leq \dim \Map_{\Clpt(\px)}(G,\compl) 
    = \abs{\Clpt(\px)}$,
  which is~\ref{i:ax_grow}.
  \Cref{i:ax_one} follows since
  the regular character
  $\rho_G = \sum_{X\in \px}\sigma_X$ is in the span of the 
  characters $\sigma_X$.
  Finally, when $\px \preceq \pttf{Y}$, 
  then every $Y\in \pttf{Y}$
  is a union of blocks of $\px$,
  and $\sigma_Y $ is the sum of the corresponding
  $\sigma_X$'s.
  Then it is clear from the definition that
  $g \sim_{\px} h$ implies $g\sim_{\pttf{Y}} h$, as claimed.
\end{proof}

Next we want to describe how a 
$G$-invariant partition of $G$ determines a partition
of $\Irr G$.
First, we need to introduce some notation.
Recall that any 
  $\chi\in \Irr G$ 
  defines a central primitive idempotent 
  $e_{\chi}$
  of the group algebra $ \compl G$,
  namely
  \[ e_{\chi}= \frac{\chi(1)}{\abs{G}}\sum_{g\in G} \chi(g)g^{-1}.
  \]
These idempotents yield a decomposition of the center of the
group algebra: Namely, we have 
  \[\Z(\compl G)= \bigoplus_{\chi\in \Irr G}
      \Z(\compl G)e_{\chi}
      =\bigoplus_{\chi\in \Irr G} \compl e_{\chi}.
  \]  
Moreover, recall that 
  \[ \omega_{\chi}\colon \Z(\compl G) \to \compl
     , \quad 
     \omega_{\chi}(z)= \frac{\chi(z)}{\chi(1)}
     ,
  \]
  is the  \defemph{central character} 
  associated to $\chi\in \Irr G$.
  It is defined by the property
  $R(z) = \omega_{\chi}(z)I$ for $z\in \Z(\compl G)$,
  where $R$ is a representation affording $\chi$.
The central character $\omega_{\chi}$ also describes
the projection to the component $\compl e_{\chi}$
of $\Z(\compl G)$.
In other words, we have
\[ z = \sum_{\chi\in \Irr G} \omega_{\chi}(z) e_{\chi}
   \quad \text{for all } z\in \Z(\compl G).
\]
This yields the following lemma:

\begin{lemma}\label{l:zcg_irrg}
  For $z\in \Z(\compl G)$, define
  \[ \alpha_z\colon \Irr G \to \compl
     ,\quad \alpha_z(\chi) = \omega_{\chi}(z) 
     = \frac{\chi(z)}{\chi(1)}.
  \]
  The map $z\mapsto \alpha_z$
  defines an algebra isomorphism
  \[ \Z(\compl G) \iso  \Map(\Irr G, \compl).
  \]
  The inverse sends a map $f\colon \Irr G\to \compl$
  to the element $\sum_{\chi} f(\chi)e_{\chi}$.
\end{lemma}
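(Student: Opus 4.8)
The plan is to recognize the asserted map as the coordinate isomorphism arising from the idempotent decomposition $\Z(\compl G) = \bigoplus_{\chi\in\Irr G} \compl e_{\chi}$ recalled just before the lemma, and to verify separately that $z\mapsto \alpha_z$ is an algebra homomorphism and that it is bijective with the stated inverse.

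First I would check that $z\mapsto \alpha_z$ is a unital algebra homomorphism, where $\Map(\Irr G,\compl)$ is equipped with the pointwise product. Linearity is immediate, since each $\omega_{\chi}$ is $\compl$-linear, and unit-preservation follows from $\omega_{\chi}(1)=\chi(1)/\chi(1)=1$, so that $\alpha_1$ is the all-$1$-function. The one structural point is multiplicativity, namely $\alpha_{zw}(\chi)=\alpha_z(\chi)\alpha_w(\chi)$ for every $\chi$, which is exactly $\omega_{\chi}(zw)=\omega_{\chi}(z)\omega_{\chi}(w)$. This is the defining feature of the central character: if $R$ affords $\chi$, then $R(z)=\omega_{\chi}(z)I$ and $R(w)=\omega_{\chi}(w)I$ for central $z,w$, whence $R(zw)=R(z)R(w)=\omega_{\chi}(z)\omega_{\chi}(w)I$, giving the claim.

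Next I would establish bijectivity using the expansion $z=\sum_{\chi}\omega_{\chi}(z)e_{\chi}$ displayed above the lemma. Since $\alpha_z(\chi)=\omega_{\chi}(z)$, this reads $z=\sum_{\chi}\alpha_z(\chi)e_{\chi}$, which exhibits the proposed map $f\mapsto \sum_{\chi}f(\chi)e_{\chi}$ as a left inverse of $z\mapsto\alpha_z$; in particular the latter is injective. For the reverse composite I would specialize the expansion to $z=e_{\psi}$, reading off $\omega_{\chi}(e_{\psi})=\delta_{\chi\psi}$ from the linear independence of the idempotents $e_{\chi}$. Then for an arbitrary $f$, setting $z=\sum_{\psi}f(\psi)e_{\psi}$ gives $\alpha_z(\chi)=\sum_{\psi}f(\psi)\omega_{\chi}(e_{\psi})=f(\chi)$, so the two maps are mutually inverse and $z\mapsto\alpha_z$ is the desired isomorphism $\Z(\compl G)\iso \Map(\Irr G,\compl)$.

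I do not expect any serious obstacle: everything rests on facts already recalled in the excerpt, and the only non-formal ingredient is the multiplicativity of the central characters. The main point to state carefully is the convention that $\Map(\Irr G,\compl)$ carries the pointwise product, so that the isomorphism is simply the coordinatewise identification $\Z(\compl G)\iso \prod_{\chi}\compl e_{\chi}\iso \compl^{\Irr G}$ induced by the orthogonal idempotents $e_{\chi}$.
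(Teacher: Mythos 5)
Your proof is correct and follows essentially the same route as the paper, which states this lemma as an immediate consequence of the recalled idempotent decomposition $\Z(\compl G)=\bigoplus_{\chi}\compl e_{\chi}$ and the expansion $z=\sum_{\chi}\omega_{\chi}(z)e_{\chi}$, giving no separate proof. You have simply filled in the routine verifications (multiplicativity of the central characters and the identity $\omega_{\chi}(e_{\psi})=\delta_{\chi\psi}$) that the paper treats as standard.
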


By \cref{l:partt_subalgs},
there is a natural correspondence between
the unital subalgebras of 
$\Z(\compl G)\iso \Map(\Irr G, \compl)$
and partitions of $\Irr G$.
For $X\subseteq \Irr G$, we write
\[ e_X = \sum_{\chi \in X} e_{\chi}
       = \frac{1}{\abs{G}}\sum_{g\in G} \sigma_X(g)g^{-1}.
\]
Then the subalgebra $A_{\px}$
corresponding to a partition $\px$ of $\Irr G$
is given by
$A_{\px}=\bigoplus_{X\in \px} \compl e_X$ of $\Z(\compl G)$.

We also use the following notation:
For any subset $K$ of $G$, we write 
\[ \hat{K} := \sum_{g\in K} g \in \compl G.
\]

\begin{defi}\label{def:IrPt}
   Let $\pk$ be a collection
   of subsets of $G$, such that each member of $\pk$
   is a union of conjugacy classes of $G$.   
   Then define $\Irpt(\pk)$ to be the partition of 
   $\Irr G$ whose members are the equivalence classes
   of the relation
   on $\Irr G$ defined by $\chi \sim_{\pk} \psi $ if and only if
   $\chi(\hat{K})/ \chi(1) =\psi(\hat{K})/\psi(1)$
   for all $K\in \pk $.
\end{defi}

We have the following result, which is completely dual to
\cref{l:ax_elprops}.

\begin{lemma}\label{l:bk_elprops}
  Let $\pk$ be a collection of $G$-invariant subsets of $G$.
  \begin{enums}
  \item \label{i:bk_coarsest}
         $\Irpt(\pk)$ is the unique coarsest partition of\/
         $\Irr G$ such that for every $K\in \pk$,
         the map
         $\alpha_{\hat{K}}$ is constant on the members
         of\/ $\Irpt(\pk)$.   
  \item \label{i:bk_subalg}
        $\sum_{X\in \Irpt(\pk)} \compl e_X$ is the subalgebra of 
        $\Z(\compl G)$
        generated by the block sums $\hat{K}$ for $K\in \pk$.
  \end{enums}
  If $\pk$ is a $G$-invariant partition of $G$, then also:
  \begin{enums}[resume]
  \item \label{i:bk_grow}
        $\abs{\pk} \leq \abs{\Irpt(\pk)}$.
  \item \label{i:bk_one} $\{1_G\} \in \Irpt(\pk)$.
  \item \label{i:bk_refine} When $\pttf{L}$ is another
        $G$-invariant partition of
        $G$ with $\pk \preceq \pttf{L}$,
        then $\Irpt(\pk) \preceq \Irpt(\pttf{L})$.
  \end{enums}
\end{lemma}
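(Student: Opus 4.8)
The plan is to transport the whole statement through the algebra isomorphism $\Z(\compl G) \iso \Map(\Irr G, \compl)$ of \cref{l:zcg_irrg} and then feed it to the abstract machinery of \cref{sec:parttalg}, applied to the finite set $S = \Irr G$. The crucial observation is the dictionary this isomorphism provides: since each $K \in \pk$ is $G$-invariant, $\hat K$ lies in $\Z(\compl G)$ and is sent to the function $\alpha_{\hat K}$; and, reading off the inverse map $f \mapsto \sum_\chi f(\chi) e_\chi$, a subalgebra of the shape $\bigoplus_{X \in \px} \compl e_X$ corresponds exactly to $\Map_\px(\Irr G, \compl)$, because a function constant on each block $X$ with value $c_X$ pulls back to $\sum_X c_X e_X$. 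This is precisely the correspondence that makes \cref{l:bk_elprops} ``completely dual'' to \cref{l:ax_elprops}, with the pair $(\hat K, \alpha_{\hat K})$ taking over the role played there by $\sigma_X$.

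With this dictionary, parts \cref{i:bk_coarsest} and \cref{i:bk_subalg} fall out of \cref{c:gensubalg} applied to $B = \{\alpha_{\hat K} : K \in \pk\}$. The equivalence relation attached to $B$ there, namely $\chi \sim \psi$ iff $\alpha_{\hat K}(\chi) = \alpha_{\hat K}(\psi)$ for all $K$, is verbatim the relation $\sim_\pk$ of \cref{def:IrPt}, so its partition is $\Irpt(\pk)$ and \cref{c:gensubalg} identifies the generated subalgebra as $\Map_{\Irpt(\pk)}(\Irr G, \compl)$. Translating back through the dictionary yields \cref{i:bk_subalg}, and the order-reversing bijection of \cref{l:partt_subalgs} (coarsest partition on which all $\alpha_{\hat K}$ are constant $\leftrightarrow$ smallest subalgebra containing them all) yields \cref{i:bk_coarsest}.

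For the remaining three parts I assume $\pk$ is a partition. Part \cref{i:bk_grow} is a dimension count dual to the linear independence of the $\sigma_X$: distinct blocks are disjoint, so the elements $\hat K$ have pairwise disjoint supports and are linearly independent in $\Z(\compl G)$; hence the subalgebra they generate has dimension at least $\abs\pk$, while by \cref{i:bk_subalg} it equals $\bigoplus_{X\in\Irpt(\pk)}\compl e_X$, of dimension $\abs{\Irpt(\pk)}$. For \cref{i:bk_one} the dual of the regular character is $\hat G = \sum_{K\in\pk}\hat K$, which lies in the generated subalgebra; since $\sum_{g\in G}\chi(g) = \abs G\,\langle\chi,1_G\rangle$, one gets $\alpha_{\hat G}(\chi) = \abs G$ for $\chi = 1_G$ and $0$ otherwise, so $\alpha_{\hat G}$ separates the trivial character from all others and forces $\{1_G\}$ to be a block. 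For \cref{i:bk_refine}, if $\pk \preceq \pttf{L}$ then each $\hat L$ is the sum of those $\hat K$ with $K \subseteq L$, and as $z \mapsto \alpha_z$ is linear, $\chi \sim_\pk \psi$ implies $\alpha_{\hat L}(\chi) = \alpha_{\hat L}(\psi)$ for every $L$, i.e.\ $\chi \sim_{\pttf{L}}\psi$; thus $\Irpt(\pk)$ refines $\Irpt(\pttf{L})$.

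I do not expect a genuine obstacle here: once \cref{l:zcg_irrg} is granted, the argument is entirely formal and mirrors the proof of \cref{l:ax_elprops} line by line. The one point demanding real care is the dictionary itself, specifically that $z \mapsto \alpha_z$ is an \emph{algebra} isomorphism (so that ``subalgebra generated by the $\hat K$'' matches ``subalgebra generated by the $\alpha_{\hat K}$'') and that it carries $\bigoplus_X \compl e_X$ onto $\Map_\px(\Irr G,\compl)$; after that, the only places where one must resist sleepwalking are the disjoint-support input in \cref{i:bk_grow} and the identification of $\hat G$ (rather than any single block sum) as the element whose image isolates $1_G$ in \cref{i:bk_one}.
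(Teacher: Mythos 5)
Your proposal is correct and follows essentially the same route as the paper: transport everything through the isomorphism of \cref{l:zcg_irrg}, apply the partition--subalgebra correspondence of \cref{sec:parttalg} to get \cref{i:bk_coarsest} and \cref{i:bk_subalg}, then use linear independence of the block sums for \cref{i:bk_grow}, the element $\hat{G}$ (whose central character values isolate $1_G$) for \cref{i:bk_one}, and linearity of $z\mapsto\alpha_z$ for \cref{i:bk_refine}. The paper's own proof is just a terser version of exactly these steps.
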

\begin{proof}
    Let $K\in \pk$.
    Since $K$ is a union of conjugacy classes of $G$,
    we have $\hat{K}\in \Z(\compl G)$.
    Thus the class sums $\hat{K}$ for $K\in \pk$ generate
    a subalgebra of $\Z(\compl G) \iso \compl^{\Irr G}$,
    and this subalgebra determines a partition
    $\px$ of $\Irr G$.
    By \cref{l:partt_subalgs} and \cref{l:zcg_irrg},
    the members of $\px$ are the equivalence classes of the relation
    on $\Irr G$ defined by 
    $\chi \sim_{\pk} \psi $ if and only if
    $\alpha_{\hat{K}}(\chi) = \alpha_{\hat{K}}(\psi)$
    for all $K\in \pk$.
    By the definition of $\alpha$ in \cref{l:zcg_irrg}, 
    we have $\chi\sim_{\pk} \psi$ if and only if
    $\chi(\hat{K})/ \chi(1) =\psi(\hat{K})/\psi(1)$
    for all $K\in \pk $.
    Thus $\px=\Irpt(\pk)$,
    and \ref{i:bk_coarsest} and \ref{i:bk_subalg} follow.

    When $\pk$ is a partition of $G$,
    then the sums $\hat{K}$ are linearly independent.
    Thus \ref{i:bk_grow} follows from~\ref{i:bk_subalg}.
    To see~\ref{i:bk_one}, we use that
    for $z=\sum_{g\in G} g$,
    we have $\omega_{\chi}(z)\neq 0$ if and only if
    $\chi=1_G$.
    \Cref{i:bk_refine} is easy.
\end{proof}

Notice that \cref{i:bk_subalg} yields another characterization
of the partition~$\Irpt(\pk)$.
This is the characterization given by Diaconis and
Isaacs~\cite{DiaconisIsaacs08} in the case where
$\pk$ belongs to a supercharacter theory.

Partitions belonging to a supercharacter theory
are characterized by equality in \cref{i:ax_grow}
of \cref{l:ax_elprops} or~\cref{l:bk_elprops},
respectively.

\begin{lemma}\label{l:ab_ba_char}\hfill
  \begin{enums}
  \item \label{i:abk_suba}
        Let $\pk$ be a collection of 
        $G$-invariant subsets of $G$.
        Then $\Clpt(\Irpt(\pk))$ is the coarsest partition
        $\pttf{L}$ such that the linear span
        of the sums $\hat{L}$, where $L\in \pttf{L}$,
        contains the unital subalgebra of $\Z(\compl G)$
        generated by the sums $\hat{K}$, where $K\in \pk$.
  \item \label{i:bax_suba}
        Let $\px$ be a collection of subsets of $\Irr G$.
        Then $\Irpt(\Clpt(\px))$ is the coarsest partition
        $\pttf{Y}$ such that the linear span
        of the characters $\sigma_Y$, where $Y\in \pttf{Y}$,
        contains the unital subalgebra of $\cf(G)$
        generated by the characters $\sigma_X$,
        where $X\in \px$.
  \end{enums}
\end{lemma}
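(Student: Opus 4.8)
The plan is to prove both parts by the same device: each of the two unital subalgebras in question carries a canonical basis of ``block idempotents'' (the $e_X$ from \cref{l:bk_elprops}\,(b) for part~(a), the characteristic functions $\delta_L$ for part~(b)), so the question of whether the linear span of a family of block sums contains the subalgebra reduces to a purely linear, block-by-block coefficient condition, and that condition turns out to be exactly the defining relation of $\Clpt$, respectively $\Irpt$. Throughout I will use the elementary monotonicity that a finer partition yields a larger linear span of its block sums (since a coarse block sum is the sum of the fine ones it contains).

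For part~(a), write $B$ for the subalgebra generated by the $\hat K$ ($K\in\pk$); by \cref{l:bk_elprops}\,(b) we have $B=\bigoplus_{X\in\Irpt(\pk)}\compl e_X$, so it suffices to control the individual $e_X$. Put $\pttf{L}_0=\Clpt(\Irpt(\pk))$. First I would show $\pttf{L}_0$ works: by definition of $\Clpt$ each $\sigma_X$ with $X\in\Irpt(\pk)$ is constant on the blocks of $\pttf{L}_0$, so grouping $e_X=\tfrac1{\abs{G}}\sum_{g}\sigma_X(g)g^{-1}$ according to these blocks writes $e_X$ as a combination of the elements $\sum_{g\in L}g^{-1}$. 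These are themselves block sums once one checks that $\pttf{L}_0$ is stable under $g\mapsto g^{-1}$, which follows from $\sigma_X(g^{-1})=\overline{\sigma_X(g)}$; hence every $e_X$, and so all of $B$, lies in the span of the $\hat L$ ($L\in\pttf{L}_0$). Conversely, for any partition $\pttf{L}'$ whose block sums span a space containing $B$, the membership $e_X\in\operatorname{span}\{\hat{L'}\}$ forces the coefficient of $g$ in $e_X$, namely $\tfrac1{\abs{G}}\sigma_X(g^{-1})$, to be constant on each block of $\pttf{L}'$; using $\sigma_X(g^{-1})=\overline{\sigma_X(g)}$ once more, this says precisely that $g$ and $h$ satisfy $\sigma_X(g)=\sigma_X(h)$ for all $X\in\Irpt(\pk)$ whenever they share a block of $\pttf{L}'$, that is, $\pttf{L}'\preceq\pttf{L}_0$. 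Together with monotonicity this shows the admissible partitions are exactly those refining $\pttf{L}_0$, so $\pttf{L}_0$ is the coarsest one.

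Part~(b) is the mirror image, with $\cf(G)$ in place of $\Z(\compl G)$. Let $A$ be the subalgebra generated by the $\sigma_X$ ($X\in\px$); by \cref{l:ax_elprops}\,(b) it equals $\Map_{\Clpt(\px)}(G,\compl)$, which by \cref{l:partt_subalgs} has the characteristic functions $\delta_L$ ($L\in\Clpt(\px)$) as a basis, so again it suffices to handle each $\delta_L$. Expanding a class function in the basis $\{\chi(1)\chi\}_{\chi\in\Irr G}$, a function lies in the span of the $\sigma_Y$ ($Y\in\pttf{Y}$) exactly when its coordinate sequence is constant on the blocks of $\pttf{Y}$; a short computation gives the coordinates of $\delta_L$ as $\chi\mapsto\tfrac1{\abs{G}}\overline{\alpha_{\hat L}(\chi)}$, with $\alpha$ as in \cref{l:zcg_irrg}. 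Thus $\delta_L$ lies in the span of the $\sigma_Y$ if and only if $\alpha_{\hat L}$ is constant on the blocks of $\pttf{Y}$, and by \cref{l:bk_elprops}\,(a) the coarsest $\pttf{Y}$ making every $\alpha_{\hat L}$ ($L\in\Clpt(\px)$) constant on blocks is precisely $\Irpt(\Clpt(\px))$. As before, monotonicity upgrades this to: the admissible $\pttf{Y}$ are those refining $\Irpt(\Clpt(\px))$, whose coarsest member is $\Irpt(\Clpt(\px))$ itself.

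The routine ingredients are the monotonicity remark, the basis statements from \cref{l:partt_subalgs,l:zcg_irrg}, and the two coefficient computations. The one genuinely delicate point, and where I would be most careful, is the bookkeeping of the two ``inversions'': the $g\mapsto g^{-1}$ built into $e_X=\tfrac1{\abs{G}}\sum_g\sigma_X(g)g^{-1}$ in part~(a), and the complex conjugation relating the coordinates of $\delta_L$ to $\alpha_{\hat L}$ in part~(b). In part~(a) this is exactly what makes the stability of $\Clpt(\Irpt(\pk))$ under $g\mapsto g^{-1}$ necessary, so that $\sum_{g\in L}g^{-1}$ is again one of the block sums $\hat L$; overlooking it would leave $e_X$ only in the span of the \emph{inverted} block sums, and the identification with $\Clpt(\Irpt(\pk))$ would fail.
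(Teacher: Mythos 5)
Your proof is correct and follows essentially the same route as the paper's: both parts reduce, via \cref{l:bk_elprops} and \cref{l:ax_elprops} together with \cref{l:partt_subalgs}, to checking that the coefficient functions of the idempotents $e_X$ (resp.\ of the characteristic functions $\delta_K$) in the natural bases are constant on blocks, which is exactly the defining condition of $\Clpt$ (resp.\ $\Irpt$). The only cosmetic difference is that the paper absorbs the inversion in part (a) by rewriting $e_X = \frac{1}{\abs{G}}\sum_{g}\overline{\sigma_X(g)}\,g$ via the substitution $g\mapsto g^{-1}$, whereas you keep $g^{-1}$ in the sum and instead note that inversion permutes the blocks of $\Clpt(\Irpt(\pk))$ --- both hinge on the identity $\sigma_X(g^{-1})=\overline{\sigma_X(g)}$.
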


\begin{proof}
  We begin with \ref{i:abk_k}.
  Let $\px:= \Irpt(\pk)$.
  By \cref{l:bk_elprops}~\ref{i:bk_subalg},
  the idempotents $e_X$, where $X\in \px$,
  form a basis of the subalgebra generated
  by the sums $\hat{K}$, $K\in \pk$.
  But we have
  \[ e_X = \sum_{\chi\in X} e_{\chi}
         = \frac{1}{\abs{G}} \sum_{g\in G} 
              \overline{\sigma_X(g)} g.
  \]
  By definition, $\Clpt(\px)=\Clpt(\Irpt(\pk))$ is the 
  coarsest partition~$\pttf{L}$
  of $G$ such that $\sum_{L\in \pttf{L}} \compl \hat{L}$
  contains the idempotents $e_X$, as claimed.
  
  The proof of~\ref{i:bax_x} is similar.
  First, let $K\subseteq G$ be a union of conjugacy classes,
  and let $\delta_K\colon G\to \compl$ be its characteristic function.
  By the orthogonality relations,
  we have
  \[ \delta_K = \frac{1}{\abs{G}} 
                \sum_{\chi\in \Irr G} \overline{\chi(\hat{K})} \chi 
        = \frac{1}{\abs{G}} 
          \sum_{\chi \in \Irr G}  
                 \overline{ \omega_{\chi} ( \hat{K} ) }
                 \chi(1)
                 \chi.
  \]
  Now let $\pk := \Clpt(\px)$.  
  By \cref{l:ax_elprops}~\ref{i:ax_subalg}, 
  the functions $\delta_K$ where $K\in \pk$,
  form a basis of the subalgebra generated
  by the characters $\sigma_X$, where $X\in \px$.
  By the  above formula for $\delta_K$,
  the linear span of the characters $\sigma_Y$,
  where $Y$ runs through a partition $\pttf{Y}$  of $\Irr G$,
  contains all $\delta_K$ for $K\in \pk$,
  if and only if $\pttf{Y}\preceq \Irpt(\pk)=\Irpt(\Clpt(\px))$.
  This is the claim.
\end{proof}

\begin{cor} \label{c:ab_ba_ref}\hfill
  \begin{enums}
  \item \label{i:abk_k}
        Let $\pk$ be a $G$-invariant partition of $G$.
        Then  $\Clpt(\Irpt(\pk))\preceq \pk$.
  \item \label{i:bax_x}
        Let $\px$ be a partition of $\Irr G$.
        Then
        $\Irpt(\Clpt(\px)) \preceq \px$.
  \end{enums}
\end{cor}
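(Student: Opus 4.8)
The plan is to read both statements off \cref{l:ab_ba_char} almost immediately: that lemma describes each double composite as a \emph{coarsest} partition whose relevant linear span contains a generated subalgebra, and a generated subalgebra always contains its own generators. Concretely, to prove \cref{i:abk_k} I set $\pttf{L} := \Clpt(\Irpt(\pk))$. By \cref{l:ab_ba_char}~\ref{i:abk_suba}, the linear span of the block sums $\hat{L}$, $L \in \pttf{L}$, contains the subalgebra of $\Z(\compl G)$ generated by the class sums $\hat{K}$, $K \in \pk$, and hence contains each generator $\hat{K}$. It remains to turn this membership into a refinement. Identifying $\Z(\compl G)$ with functions on the set of conjugacy classes through the class-sum basis, the span of $\{\hat{L} : L \in \pttf{L}\}$ is exactly the set of central elements whose class-coefficients are constant on the blocks of $\pttf{L}$, whereas $\hat{K}$ is the indicator of the classes lying in $K$. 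By linear independence of the class sums, $\hat{K}$ lies in that span precisely when $K$ is a union of blocks of $\pttf{L}$; as this holds for every $K \in \pk$, we get $\pttf{L} \preceq \pk$.

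\Cref{i:bax_x} is the exact dual, run on the character side. Setting $\pttf{Y} := \Irpt(\Clpt(\px))$, \cref{l:ab_ba_char}~\ref{i:bax_suba} gives that $\mathrm{span}\{\sigma_Y : Y \in \pttf{Y}\}$ contains the subalgebra of $\cf(G)$ generated by the $\sigma_X$, $X \in \px$, and in particular each generator $\sigma_X$. Writing $\sigma_X = \sum_{\chi \in X}\chi(1)\chi$ and comparing coefficients of each $\chi$, the linear independence of $\Irr G$ forces every block $Y \in \pttf{Y}$ to be either contained in $X$ or disjoint from $X$. Letting $X$ range over the partition $\px$ confines each $Y$ to a single block of $\px$, so $\pttf{Y} \preceq \px$.

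The only content beyond quoting \cref{l:ab_ba_char} is the final coefficient-comparison step that converts the statement that $\hat{K}$ (resp. $\sigma_X$) lies in a block-sum span into the statement that the block is a union of the finer blocks, and this is exactly where the linear independence of the class sums (resp. the irreducible characters) is used — the same independence that drives the growth inequalities in \cref{l:bk_elprops}~\ref{i:bk_grow} and \cref{l:ax_elprops}~\ref{i:ax_grow}. I expect no genuine obstacle here; the real work was already done in setting up \cref{l:ab_ba_char}, so this corollary is essentially a bookkeeping consequence.
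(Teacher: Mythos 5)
Your proof is correct and follows exactly the route the paper intends: the paper states \cref{c:ab_ba_ref} without proof as an immediate consequence of \cref{l:ab_ba_char}, namely that the block-sum (resp.\ supercharacter) span of the double composite contains each generator $\hat{K}$ (resp.\ $\sigma_X$), which by coefficient comparison forces each block of $\pk$ (resp.\ $\px$) to be a union of blocks of the finer partition. Your explicit linear-independence argument is precisely the bookkeeping the paper leaves to the reader, so there is nothing to correct.
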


\begin{example}
   Let $\pttf{N}$ be a collection of normal subgroups of 
   the finite group~$G$.
   Then we can apply \cref{l:ab_ba_char}~\ref{i:abk_suba}
   to $\pttf{N}$.
   We want to describe $\pttf{L}:= \Clpt(\Irpt(\pttf{N}))$.   
   
   Let $A$ be the unital subalgebra of $\Z(\compl G)$ 
   generated by the sums $\hat{N}$ for $N\in \pttf{N}$.
   For $N$, $M\in \pttf{N}$, we have
   $\hat{N}\hat{M} = \abs{N\cap M} \widehat{NM} \in A$.
   Let $\pttf{S} = \pttf{S}(\pttf{N})$ be the set of normal subgroups
   of the form $N_1 \dotsm N_r $ with $N_i\in \pttf{N} $,
   including the trivial subgroup as the empty product.
   Then for each $S\in \pttf{S}$ there is some positive
   integer $n_S$ such that $n_S\hat{S} \in A$.
   On the other hand, the sums $\hat{S}$ with $S\in \pttf{S}$
   obviously span a subalgebra of $\Z(\compl G)$.
   
   By \cref{l:ab_ba_char}~\ref{i:abk_suba},
   $\pttf{L}$ is the coarsest partition of $G$ such that 
   every $S\in \pttf{S}$ is a union of elements of $\pttf{L}$.
   Describing $\pttf{L}$ is now a matter of elementary set theory:   
   Let $\pttf{N}_1$ be the set of intersections
   of finitely many elements of $\pttf{S}$, including $G$.
   This is then a set of normal subgroups of $G$ which is closed
   under taking intersections.
   (In general, $\pttf{N}_1$ may not be closed under taking
   products.)
   The partition~$\pttf{L}$ is 
   the coarsest partition of $G$ such that 
   every $N\in \pttf{N}_1$ is a union of elements of $\pttf{L}$.
   Since $\pttf{N}_1$ is closed under intersections,
   every $g\in G$ is contained in a unique minimal
   $N\in \pttf{N}_1$.
   Let us write $g^{\pttf{N}_1}$ for this normal subgroup.
   The (nonempty) fibers of the map 
   $G\ni g \mapsto g^{\pttf{N}_1} \in \pttf{N}_1$
   are then the blocks of the partition $\pttf{L}$.
   That is, every block $L\in \pttf{L}$ has the form
   \[ L=L_N = \{g\in G \mid g^{\pttf{N}_1} = N\}
           = N \setminus 
             \bigcup_{ 
                \substack{ 
                   M\in\pttf{N}_1 \\
                   M < N
                }
             } M
       \quad \text{for some } N\in \pttf{N}_1.   
   \]
   It follows that the sums
   $\hat{N}$ with $N\in \pttf{N}_1$ and the sums
   $\hat{L}$ with $L\in \pttf{L}$ 
   span the same $\ints$-submodule of
   $\Z(\ints G)$.
   
   When $\pttf{N}_1$ is closed under products,
   then the linear span of the sums $\hat{N}$ with $N\in \pttf{N}_1$
   is actually a subalgebra.
   In general, this is not the case, but we can repeat the process
   until we have found a collection of normal subgroups that is closed
   under intersections and products.
   The corresponding partition of $G$ then belongs to a supercharacter
   theory.
   These supercharacter theories 
      were described by F.~Aliniaeifard~\cite{Aliniaeifard17}.
\end{example}

\begin{thm}\label{t:x_sct}
  Let $\px$ be a partition of $\Irr G$.
  Then the following are equivalent:
  \begin{enums}
  \item \label{i:x_equ} $\abs{\px} = \abs{\Clpt(\px)}$.
  \item \label{i:x_subalg}
         The linear span in the space of class functions
        of the characters $\sigma_X$ for $X\in \px$
        is a unital subalgebra of the algebra
        of all class functions.
  \item \label{i:x_bax} $\px= \Irpt(\Clpt(\px))$.
  \end{enums}
\end{thm}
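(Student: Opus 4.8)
The plan is to prove the cycle \cref{i:x_equ} $\Rightarrow$ \cref{i:x_subalg} $\Rightarrow$ \cref{i:x_bax} $\Rightarrow$ \cref{i:x_equ}. The whole argument revolves around comparing two objects: the subspace $V := \operatorname{span}\{\sigma_X : X\in\px\}$ of $\cf(G)$, and the unital subalgebra $A$ of $\cf(G)$ that the $\sigma_X$ generate. First I would record the two dimension counts I will use repeatedly. Since $\px$ is a partition, distinct $\sigma_X$ are sums over pairwise disjoint sets of irreducibles, hence linearly independent, so $\dim V = \abs{\px}$. By \cref{l:ax_elprops}~\ref{i:ax_subalg}, $A = \Map_{\Clpt(\px)}(G,\compl)$, so $\dim A = \abs{\Clpt(\px)}$. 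Finally, $V\subseteq A$ always holds.

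For \cref{i:x_equ} $\Rightarrow$ \cref{i:x_subalg}: if $\abs{\px}=\abs{\Clpt(\px)}$ then $\dim V = \dim A$, and as $V\subseteq A$ with both finite-dimensional, $V=A$. Since $A$ is a unital subalgebra, so is $V$, which is exactly \cref{i:x_subalg}.

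For \cref{i:x_subalg} $\Rightarrow$ \cref{i:x_bax}: if $V$ is a unital subalgebra, then it contains the generators $\sigma_X$ and hence the whole unital subalgebra they generate, giving $A\subseteq V$; combined with $V\subseteq A$ this forces $V=A$, so in particular $V$ contains $A$. Now I would invoke \cref{l:ab_ba_char}~\ref{i:bax_suba}, which says that $\Irpt(\Clpt(\px))$ is the coarsest partition $\pttf{Y}$ of $\Irr G$ for which $\operatorname{span}\{\sigma_Y : Y\in\pttf{Y}\}$ contains $A$. Since $\px$ is itself such a partition, $\px\preceq\Irpt(\Clpt(\px))$; together with the reverse refinement $\Irpt(\Clpt(\px))\preceq\px$ from \cref{c:ab_ba_ref}~\ref{i:bax_x}, the two partitions coincide, which is \cref{i:x_bax}.

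For \cref{i:x_bax} $\Rightarrow$ \cref{i:x_equ}: I would chain the inequalities $\abs{\px} \leq \abs{\Clpt(\px)} \leq \abs{\Irpt(\Clpt(\px))}$, the first from \cref{l:ax_elprops}~\ref{i:ax_grow} and the second from \cref{l:bk_elprops}~\ref{i:bk_grow} applied to the $G$-invariant partition $\Clpt(\px)$. Assumption \cref{i:x_bax} gives $\abs{\Irpt(\Clpt(\px))}=\abs{\px}$, so the chain collapses to equalities and in particular $\abs{\px}=\abs{\Clpt(\px)}$. The computations are routine once the dual characterization in \cref{l:ab_ba_char}~\ref{i:bax_suba} is available, so the only real obstacle is bookkeeping: keeping straight the inclusion $V\subseteq A$ and the two opposing refinement directions. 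The one point genuinely deserving care is that the word \emph{unital} in \cref{i:x_subalg} is essential — it is what upgrades ``$V$ is closed under multiplication'' to $V=A$, and hence guarantees that $V$ contains the all-ones function (note that $\sum_{X}\sigma_X=\rho_G$ is \emph{not} the all-ones function, so unitality is a genuine extra demand rather than an automatic consequence).
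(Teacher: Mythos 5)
Your proof is correct and rests on the same ingredients as the paper's: the equivalence \cref{i:x_equ}$\Leftrightarrow$\cref{i:x_subalg} comes from the dimension count supplied by \cref{l:ax_elprops}~\ref{i:ax_subalg} and~\ref{i:ax_grow}, and the passage to \cref{i:x_bax} from \cref{l:ab_ba_char}~\ref{i:bax_suba} together with \cref{c:ab_ba_ref}~\ref{i:bax_x}. The only difference is organizational: you close a cycle by proving \cref{i:x_bax}$\Rightarrow$\cref{i:x_equ} via the counting inequality of \cref{l:bk_elprops}~\ref{i:bk_grow} applied to the ($G$-invariant) partition $\Clpt(\px)$, whereas the paper reads \cref{i:x_bax}$\Rightarrow$\cref{i:x_subalg} directly off \cref{l:ab_ba_char}~\ref{i:bax_suba} --- a cosmetic rearrangement, and your closing observation that unitality in \cref{i:x_subalg} is essential (the span of the $\sigma_X$ can be multiplicatively closed without containing the all-ones function) is also correct.
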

\begin{proof}
  The equivalence of \ref{i:x_equ} and \ref{i:x_subalg}
  follows from 
  \ref{i:ax_subalg} and \ref{i:ax_grow} in \cref{l:ax_elprops},
  and \cref{l:ab_ba_char} \ref{i:bax_suba}
  implies that~\ref{i:x_subalg} and~\ref{i:x_bax}
  are equivalent.
\end{proof}

When the equivalent conditions in \cref{t:x_sct}
hold, then the pair $(\px, \Clpt(\px))$ forms
a supercharacter theory.
Indeed, Condition~\ref{i:x_equ}
is actually equivalent to the \hyperref[def:sct]{definition}
given in the introduction.

We leave the following dual as an exercise for the reader:

\begin{thm}\label{t:k_sct}
  Let $\pk$ be a $G$-invariant partition of $G$.
  Then the following are equivalent:
  \begin{enums}
  \item \label{i:k_equ} $\abs{\pk} = \abs{\Irpt(\pk)}$.
  \item \label{i:k_subalg}
         The linear span in $\compl G$
        of the sums $\hat{K}$ for $K\in \pk$
        is a unital subalgebra of $\Z(\compl G)$
  \item \label{i:k_abk} $\pk= \Clpt(\Irpt(\pk))$.
  \end{enums}
\end{thm}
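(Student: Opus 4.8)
The plan is to mirror the proof of \cref{t:x_sct} step for step, replacing each ingredient by its dual: where that proof invoked \cref{l:ax_elprops} and \cref{l:ab_ba_char}~\ref{i:bax_suba}, I would instead invoke \cref{l:bk_elprops} and \cref{l:ab_ba_char}~\ref{i:abk_suba}. Concretely, I would derive \ref{i:k_equ}$\iff$\ref{i:k_subalg} from the subalgebra/dimension statement in \cref{l:bk_elprops}, and \ref{i:k_subalg}$\iff$\ref{i:k_abk} from the characterization of $\Clpt(\Irpt(\pk))$ in \cref{l:ab_ba_char}~\ref{i:abk_suba}.

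For \ref{i:k_equ}$\iff$\ref{i:k_subalg}, let $A$ denote the unital subalgebra of $\Z(\compl G)$ generated by the class sums $\hat{K}$ ($K\in\pk$), and let $V$ be their linear span. By \cref{l:bk_elprops}~\ref{i:bk_subalg} we have $A=\sum_{X\in\Irpt(\pk)}\compl e_X$, so $\dim A=\abs{\Irpt(\pk)}$; and since $\pk$ is a partition the sums $\hat{K}$ are linearly independent, so $\dim V=\abs{\pk}$. As the generators lie in $A$, we always have $V\subseteq A$. Now $V$ is a unital subalgebra exactly when $V=A$: the forward direction is clear, and if $V$ is a unital subalgebra containing the generators then $A\subseteq V$, forcing $V=A$. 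Finally $V=A$ holds if and only if $\dim V=\dim A$, i.e.\ $\abs{\pk}=\abs{\Irpt(\pk)}$, which gives the equivalence.

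For \ref{i:k_subalg}$\iff$\ref{i:k_abk}, set $\pttf{L}:=\Clpt(\Irpt(\pk))$. By \cref{l:ab_ba_char}~\ref{i:abk_suba}, $\pttf{L}$ is the \emph{coarsest} partition whose block-sum span contains $A$, and by \cref{c:ab_ba_ref}~\ref{i:abk_k} we already know $\pttf{L}\preceq\pk$. The one order-theoretic point I would spell out is that the property ``block-sum span contains $A$'' is preserved under refinement: refining a partition only enlarges its block-sum span, since each coarser block sum is a sum of finer ones. Hence this property defines a down-set in the refinement lattice whose greatest element is $\pttf{L}$, so a partition has the property if and only if it is $\preceq\pttf{L}$. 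Now if \ref{i:k_subalg} holds then $V=A$, so $\pk$ has the property and thus $\pk\preceq\pttf{L}$; together with $\pttf{L}\preceq\pk$ this yields $\pk=\pttf{L}$, which is \ref{i:k_abk}. Conversely, if $\pk=\pttf{L}$ then $\pk$ has the property $V\supseteq A$; combined with the always-valid $V\subseteq A$ this gives $V=A$, a unital subalgebra, which is \ref{i:k_subalg}.

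I expect the only genuinely delicate point to be keeping the two opposite monotonicities straight — refinement of partitions reverses inclusion of block-sum spans, exactly as in \cref{l:partt_subalgs} — and using this to read off that the ``coarsest'' partition produced by \cref{l:ab_ba_char}~\ref{i:abk_suba} is in fact the greatest element of the relevant down-set. Everything else is a routine transcription of the self-dual argument, with class sums $\hat{K}$ in $\Z(\compl G)$ playing the role that the characters $\sigma_X$ played in $\cf(G)$.
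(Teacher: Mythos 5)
Your proposal is correct and is essentially the proof the paper intends: the paper leaves \cref{t:k_sct} as an exercise in dualizing \cref{t:x_sct}, whose proof invokes \cref{l:ax_elprops}~\ref{i:ax_subalg},\ref{i:ax_grow} and \cref{l:ab_ba_char}~\ref{i:bax_suba}, and you have carried out exactly the dual argument via \cref{l:bk_elprops}~\ref{i:bk_subalg},\ref{i:bk_grow} and \cref{l:ab_ba_char}~\ref{i:abk_suba}. The details you fill in (linear independence of the $\hat{K}$, the identification $V=A$ characterizing unitality, and the monotonicity of block-sum spans under refinement together with \cref{c:ab_ba_ref}~\ref{i:abk_k}) are all sound.
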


Condition~\ref{i:k_subalg} means that the superclass sums
$\hat{K}$ span a so-called Schur ring over $G$.
(The connection between Schur rings and supercharacter theories 
 is explained with more details by Hendrickson~\cite{Hendrickson12}.)

Notice that when $\px= \Irpt(\Clpt(\px))$,
then $\Clpt(\px)=\Clpt(\Irpt(\Clpt(\px)))$.
Thus the conditions in \cref{t:x_sct}
imply that the conditions of \cref{t:k_sct}
hold for $\pk:= \Clpt(\px)$,
and conversely, the conditions in \cref{t:k_sct}
imply the conditions in \cref{t:x_sct}
for $\px:=\Irpt(\pk)$.

We have by now proved all the claims in 
\cref{t:main} from the introduction.
We have also done all the work for the following
characterizations of supercharacter theories:

\begin{thm}\label{t:superch}
Let $\px$ be a partition of\/ $\Irr G$ and 
$\pk$ a $G$-invariant partition of $G$.
Then the following are equivalent:
\begin{enums}
\item \label{i:scdef}
      $\abs{\pk}=\abs{\px}$ 
      and
      $\pk = \Clpt(\px)$.
\item \label{i:scomegas}
      $\abs{\pk}=\abs{\px}$
      and
      $\px = \Irpt(\pk)$.
\item \label{i:scsymequ}
      $\pk = \Clpt(\px)$ and $\px=\Irpt(\pk)$.
\item \label{i:classalg}
      The linear span in $\compl G$ of the 
      elements $\hat{K}$ for $K\in \pk $
      is a subalgebra of $\Z(\compl G)$,
      and $\px = \Irpt(\pk)$.
\item \label{i:charalg} 
      The linear span in the space of class functions
      of the characters $\sigma_X$ for $X\in \px$
      is a subalgebra of the  ring of all class functions
      and $\pk = \Clpt(\px)$.
\end{enums}
\end{thm}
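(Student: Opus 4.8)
The plan is to reduce each of the five statements to one of the \emph{identities} $\pk = \Clpt(\px)$ or $\px = \Irpt(\pk)$, paired with one of the mutually equivalent conditions already isolated in \cref{t:x_sct} and \cref{t:k_sct}, and then to match these reductions against one another. This organizes the five conditions into two clusters (one governed by $\Clpt$, one by $\Irpt$) that are then joined through the symmetric statement.

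First I would dispatch the two ``obvious'' pairs. In statement~\ref{i:scdef} the assumption $\pk = \Clpt(\px)$ turns the numerical equality $\abs{\pk} = \abs{\px}$ into $\abs{\px} = \abs{\Clpt(\px)}$, which is condition~\ref{i:x_equ} of \cref{t:x_sct}; by the equivalence \ref{i:x_equ}~$\Leftrightarrow$~\ref{i:x_subalg} there, this is the same as saying that the span of the $\sigma_X$ is a subalgebra. Thus \ref{i:scdef} and \ref{i:charalg} both amount to ``$\pk = \Clpt(\px)$ together with the conditions of \cref{t:x_sct}'', so \ref{i:scdef}~$\Leftrightarrow$~\ref{i:charalg}. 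Dually, in \ref{i:scomegas} the assumption $\px = \Irpt(\pk)$ rewrites $\abs{\pk} = \abs{\px}$ as $\abs{\pk} = \abs{\Irpt(\pk)}$, i.e.\ condition~\ref{i:k_equ} of \cref{t:k_sct}, which by \ref{i:k_equ}~$\Leftrightarrow$~\ref{i:k_subalg} is the subalgebra condition appearing in~\ref{i:classalg}; hence \ref{i:scomegas}~$\Leftrightarrow$~\ref{i:classalg}.

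It remains to tie the two clusters to the symmetric statement~\ref{i:scsymequ}. For \ref{i:scsymequ}~$\Rightarrow$~\ref{i:scdef} I would substitute $\pk = \Clpt(\px)$ into $\px = \Irpt(\pk)$ to get $\px = \Irpt(\Clpt(\px))$, which is condition~\ref{i:x_bax} of \cref{t:x_sct}; together with $\pk = \Clpt(\px)$ this yields \ref{i:scdef}. Conversely \ref{i:scdef} supplies $\pk = \Clpt(\px)$ and, through \cref{t:x_sct}~\ref{i:x_bax}, the identity $\px = \Irpt(\Clpt(\px)) = \Irpt(\pk)$, which is \ref{i:scsymequ}. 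The equivalence \ref{i:scsymequ}~$\Leftrightarrow$~\ref{i:scomegas} is the verbatim dual, substituting $\px = \Irpt(\pk)$ into $\pk = \Clpt(\px)$ to reach $\pk = \Clpt(\Irpt(\pk))$ and invoking \cref{t:k_sct}~\ref{i:k_abk}. Chaining the pieces gives \ref{i:scdef}~$\Leftrightarrow$~\ref{i:scsymequ}~$\Leftrightarrow$~\ref{i:scomegas}, which together with the two cluster equivalences shows that all five statements are equivalent.

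Since all of the real content is already packaged in \cref{t:x_sct} and \cref{t:k_sct}, I expect no genuine obstacle: the argument is pure bookkeeping. The only point demanding care is that each rewriting of a cardinality equality (such as passing from $\abs{\pk} = \abs{\px}$ to $\abs{\px} = \abs{\Clpt(\px)}$) is licensed \emph{solely} by the companion identity assumed in the same statement, so I would keep the relation and its accompanying numerical or subalgebra condition firmly paired at every step, and cite the precise labelled part of \cref{t:x_sct} or \cref{t:k_sct} being used.
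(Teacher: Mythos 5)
Your proposal is correct and takes essentially the same route as the paper: the paper's own proof is a one-sentence reduction to \cref{t:x_sct} and \cref{t:k_sct}, and your argument supplies exactly the pairing-and-substitution bookkeeping that this citation leaves to the reader. In particular, your care in noting that each cardinality or subalgebra condition is rewritten only under the companion identity assumed in the same statement is precisely what makes the terse reduction legitimate.
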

\begin{proof}
  \ref{i:scdef}, \ref{i:scomegas}, \ref{i:scsymequ} and
  \ref{i:charalg} are equivalent
  by \cref{t:x_sct},
  and \cref{t:k_sct} yields the equivalence with
  \ref{i:classalg}.
\end{proof}

\begin{remark}
  The conditions of \cref{t:superch}
  are also equivalent to the following:
  \begin{enums}[label= \textup{(\alph*\ensuremath{'})}]
  \item \label{i:scdefw}
      $\abs{\pk}=\abs{\px}$ 
      and
      $\pk \preceq \Clpt(\px)$.      
  \item \label{i:scomegasw}
      $\abs{\pk}=\abs{\px}$
      and
      $\px \preceq \Irpt(\pk)$.            
  \item \label{i:scsymleq}
      $\pk \preceq \Clpt(\px)$
      and
      $\px \preceq \Irpt(\pk)$.      
  \end{enums}
\end{remark}
 \begin{proof} 
  Since we always have 
  $\abs{\px} \leq \abs{\Clpt(\px)}$,
  condition~\ref{i:scdef} is in fact equivalent
  to the weaker~\ref{i:scdefw}.
  Similarly, \ref{i:scomegas} and \ref{i:scomegasw}
  are equivalent.
  The equivalence of
  \ref{i:scsymleq} and \ref{i:scsymequ}
  follows from \cref{c:ab_ba_ref}
  (or otherwise).
\end{proof}

The last theorem emphasizes the duality between
superclasses and supercharacters.
The duality between classes and characters 
is maybe obscured by the following difference:
On the one side, we simply consider 
class sums $\hat{K} = \sum_{g\in K} g$,
while on the other side, 
we do not add simply the irreducible characters in a subset $X$,
but the multiples $\chi(1)\chi$.

When $\sct{X}{K}$ is a supercharacter theory,
and if $K$ and $L\in \pk$ are superclasses,
then $\hat{K}\hat{L}$ is a nonnegative integer 
linear combination of superclass sums, as is not difficult to see
\cite[Corollary~2.3]{DiaconisIsaacs08}.
But the product of two
supercharacters $\sigma_X$ and $\sigma_Y$ 
is in general only a rational combination 
of such supercharacters.
However, there is an easy remedy for this problem.
\begin{remark}
  Let $\sct{X}{K}$ be a supercharacter theory
  of $G$. For each $X\in \px$,
  define $d_X= \gcd\{ \chi(1)\mid \chi\in X \}$
  and $\tau_X = (1/d_X)\sigma_X$.
  Then every character that is constant on
  the members of $\pk$ is a nonnegative integer 
  linear combination of the $\tau_X$.
  In particular, this holds true for a product
  $\tau_X \tau_Y$.
\end{remark}
\begin{proof}
  Let $\gamma$ be a character that is constant on
  members of $\pk$.
  The $\sigma_X$, and thus the $\tau_X$, form a basis
  of the space of class functions constant on
  members of $\pk$.
  Thus there are $a_X\in \compl$ such that
  \[ \gamma = \sum_{X\in \px} a_X \tau_X.\]
  One the other hand, for $\chi\in X$ we have
  \[ (\gamma, \chi)= a_X(\tau_X,\chi)
                   =a_X \frac{\chi(1)}{d_X}
                   \in \nats,
  \]
  since $\gamma$ is a character.
  Choose $k_{\chi}\in \ints$ such that
  $d_X = \sum_{\chi\in X} k_{\chi} \chi(1)$.
  It follows
  \[ a_X = \frac{a_X}{d_X}\sum_{\chi\in X}k_{\chi}\chi(1)
        = \sum_{\chi\in X} k_{\chi} a_X \frac{\chi(1)}{d_X}\in \ints.\]
  Thus $a_X\in \nats$, as claimed.
\end{proof}
\begin{question}
  How are the supercharacters $\chi_{\lambda}$
  of algebra groups as defined 
  by Diaconis and Isaacs~\cite{DiaconisIsaacs08}
  related to the $\tau_X$?
  Is $\tau_X = \chi_{\lambda}$
  (where $X$ is the set of constituents
  of $\chi_{\lambda}$)?
\end{question}


\section{Computing all supercharacter theories of a finite group}
\label{sec:find_all_scts}

Let $(\px, \pk)$ and $(\pttf{Y}, \pttf{L})$ be
supercharacter theories of a group $G$.
We say that $(\px, \pk)$ is finer than
$(\pttf{Y}, \pttf{L})$ or that
$(\pttf{Y}, \pttf{L})$ is coarser than
$(\px, \pk)$, when
$\px$ is finer than $\pttf{Y}$.
It is clear from \cref{t:superch}
that  
$\px \preceq \pttf{Y}$ if and only if
$\pk \preceq \pttf{L}$ (for supercharacter theories).
(This has also been proved by Hendrickson~\cite{Hendrickson12}.)

It follows that the 
partially ordered set of supercharacter theories
embeds naturally 
into the partially ordered set of partitions
of $\Irr G$, or of $G$-invariant partitions of $G$.

The partitions of a given set form  a lattice. 
Let $(\px,\pk)$ and $(\pttf{L},\pttf{Y})$ be two supercharacter
theories.
Then
$(\px,\pk) \vee (\pttf{Y},\pttf{L})
 := (\px \vee \pttf{Y}, \pk \vee \pttf{L})$,
is also a supercharacter theory~\cite[Prop.~3.3]{Hendrickson12}.
It follows that the supercharacter theories of a group
form also a lattice.
Thus for every partition $\px$ of $\Irr G$,
there is a unique coarsest partition $\pttf{Y}$
refining $\px$, and such that $\pttf{Y}$
belongs to a supercharacter theory,
and similarly for $G$-invariant partitions of $G$.
The results of the last section provide a convenient way
to compute this supercharacter theory.

For example, start with an arbitrary 
$G$-invariant partition $\pk$ of $G$.
Apply in turns the maps $\Irpt$ and $\Clpt$ 
to form partitions of $\Irr G$ and of $G$.
Thus we get two chains of partitions as follows
(here and in the following,
we abbreviate $I:=\Irpt$ and $C:= \Clpt$):
\[ \begin{tikzcd}[column sep=0.01em]
      \pk \arrow[d]        & \succeq
       & CI(\pk)
         \arrow[d] & \succeq 
       & \dotsb & \succeq  
       & (C  I)^n(\pk)
          \arrow[d]
       & \succeq & \dotsb
      \\
       I(\pk) \arrow[urr]   & \succeq 
      & ICI(\pk)  \arrow[urr]  & \succeq 
      & \dotsb \arrow[urr] & \succeq
      & {I(C I)^n(\pk)}
         \arrow[urr]
      & \succeq & \dotsb .
\end{tikzcd}
\]
The number of blocks in a partition increases 
along the arrows.
The first two partitions 
that have the same numbers of blocks, 
form a supercharacter theory.

Of course, we can as well start with a partition
of $\Irr G$.

\begin{cor}\label{c:refine_to_sct}\hfill 
\begin{enums}
\item Let $\pk$ be a $G$-invariant partition of $G$.
      Choose $n$ such that 
      $(C I)^{n+1}(\pk) 
       = (CI)^n(\pk)$.
      Then $\big( I(CI)^n(\pk), (CI)^n(\pk) \big)$ is the coarsest
      supercharacter theory whose
      class partition refines $\pk$.
\item Let $\px$ be a partition of $\Irr G$.
      Choose $n$ such that $(IC)^{n+1}(\px) = (IC)^n(\px)$.
      Then $\big( (IC)^n(\px), C(IC)^n(\px) \big)$ is the coarsest
      supercharacter theory whose
      character partition refines $\px$.
\end{enums}
\end{cor}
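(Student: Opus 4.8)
The plan is to prove the two parts symmetrically; I would write out part~(a) in full and then observe that part~(b) is its formal dual under the interchange of $G$ with $\Irr G$ and of $\Clpt$ with $\Irpt$. Throughout I keep the abbreviations $I=\Irpt$ and $C=\Clpt$.

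First I would justify that the required $n$ exists. By \cref{c:ab_ba_ref}~\ref{i:abk_k} we have $CI(\pk)\preceq\pk$, and applying the same corollary to the $G$-invariant partition $CI(\pk)$ in place of $\pk$ gives $(CI)^2(\pk)\preceq CI(\pk)$; iterating, the partitions $(CI)^m(\pk)$ form a descending chain in the refinement order on partitions of the finite set $G$. The number of blocks can only increase weakly along such a chain and is bounded by $\abs G$, so the chain stabilizes, giving an $n$ with $(CI)^{n+1}(\pk)=(CI)^n(\pk)$.

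Next I would identify the stabilized partition as a superclass partition. Setting $\pttf{L}:=(CI)^n(\pk)$, the stabilization says precisely that $C(I(\pttf{L}))=\pttf{L}$, that is $\pttf{L}=\Clpt(\Irpt(\pttf{L}))$, which is condition~\ref{i:k_abk} of \cref{t:k_sct}. Hence $\bigl(\Irpt(\pttf{L}),\pttf{L}\bigr)=\bigl(I(CI)^n(\pk),(CI)^n(\pk)\bigr)$ is a supercharacter theory, and since $\pttf{L}\preceq\pk$ its class partition refines $\pk$. This already exhibits a supercharacter theory of the asserted form.

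The main point, and the step I expect to need the most care, is minimality: that $\pttf{L}$ is the coarsest superclass partition refining $\pk$. Here I would use that $C$ and $I$ are order-preserving (\cref{l:ax_elprops}~\ref{i:ax_refine} and \cref{l:bk_elprops}~\ref{i:bk_refine}), so the composite $CI$ is order-preserving as a map on $G$-invariant partitions of $G$. Let $(\pttf{Z},\pttf{M})$ be any supercharacter theory with $\pttf{M}\preceq\pk$. Since $\pttf{M}$ is a superclass partition, \cref{t:k_sct}~\ref{i:k_abk} gives $CI(\pttf{M})=\pttf{M}$, so $\pttf{M}$ is a fixed point of $CI$. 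Applying the order-preserving map $(CI)^n$ to $\pttf{M}\preceq\pk$ then yields $\pttf{M}=(CI)^n(\pttf{M})\preceq(CI)^n(\pk)=\pttf{L}$. Thus every competing superclass partition refining $\pk$ itself refines $\pttf{L}$, so $\pttf{L}$ is coarser than all of them and is therefore the unique coarsest such partition. Part~(b) follows the identical three steps using \cref{c:ab_ba_ref}~\ref{i:bax_x}, \cref{t:x_sct}~\ref{i:x_bax}, and the descending chain $(IC)^m(\px)$ of partitions of $\Irr G$.
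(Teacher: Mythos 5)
Your proposal is correct and takes essentially the same approach as the paper: the paper's own proof is the single line ``This follows from the theorems in the last section,'' and your argument is precisely an unpacking of those results (\cref{c:ab_ba_ref} for the descending chain, \cref{t:k_sct} and \cref{t:x_sct} to recognize the stabilized partition as a superclass partition, and the monotonicity statements of \cref{l:ax_elprops,l:bk_elprops} combined with the fixed-point property from \cref{t:superch} for coarseness). In particular, your explicit fixed-point/monotonicity argument for minimality supplies soundly the one detail the paper leaves implicit.
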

\begin{proof} 
    This follows from the theorems in the last section.
\end{proof}

The number of steps to reach a supercharacter theory 
is in theory bounded by 
$k(G)-\abs{\pk} = \abs{\Irr G} - \abs{\pk}$.
Usually, one needs much less steps since the number
of sets in the involved partitions grows much faster than
by one in a step.
For example, consider the alternating group $G=A_7$ on $7$ elements,
a group with $9$ conjugacy classes.
There are $4140$ partitions $\pk$ of $G$ into $G$-invariant sets and
such that
$\{1\}$ is a block of the partition.
Of these, $3$ are supercharacter theories, 
and another $3807$ partitions are such that $\Irpt(\pk)$ belongs to a
supercharacter theory.
There are $292$ partitions where we need $2$ steps to reach a 
supercharacter theory, $31$ partitions where we need $3$ steps,
and only $7$ partitions where we need $4$ steps.
This is the worst case.
(Of course, we always need to compute one step more to see
that we have actually reached a supercharacter theory.)

As another example, let $G$ be an elementary abelian group of
order~$16$.
Consider only partitions $\pk$ of the form $G = \{1\} \dcup S \dcup T$.
Up to automorphisms of the group, 
there are only $22$ such partitions,
and of these, $5$ are already supercharacter theories.
For all these partitions $\pk$, 
already $CI(\pk)$ defines a supercharacter theory.

\begin{cor}\label{c:find_superclass}
  Let $K\subseteq G$ be a normal subset.
  Then there is a supercharacter theory 
  $(\px,\pk)$ with $K \in \pk$ 
  if and only if 
  $K\in (CI)^n(\{K\})$ for all $n>0$.
  If this is the case, then
  $(CI)^n(\{K\})$ for $n$ large belongs to the coarsest
  supercharacter theory with $K$ as superclass.
  (An analogous statement holds for subsets $X\subseteq \Irr G$.)
\end{cor}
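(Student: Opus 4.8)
The plan is to set $C:=\Clpt$, $I:=\Irpt$ and $\pk_n := (CI)^n(\{K\})$ for $n\ge 1$, each of which is a $G$-invariant partition of $G$ (the output of $\Clpt$ is always a union of conjugacy classes, since the $\sigma_X$ are class functions). First I would record the global behaviour of this chain: by \cref{c:ab_ba_ref}~\ref{i:abk_k} we have $\pk_{n+1}=CI(\pk_n)\preceq\pk_n$, so the $\pk_n$ form a refining chain which, $G$ being finite, stabilizes at some $\pk_\infty=\pk_N$ with $CI(\pk_\infty)=\pk_\infty$; by \cref{t:k_sct} this $\pk_\infty$ is the class partition of a supercharacter theory $(I(\pk_\infty),\pk_\infty)$ (equivalently, one may invoke \cref{c:refine_to_sct} applied to $\pk_1$). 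So the chain \emph{always} produces a supercharacter theory, and the only question is whether $K$ survives as a block of each $\pk_n$.

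The key structural input is that $K$ is always a \emph{union} of blocks. I would prove this by induction using \cref{l:ab_ba_char}~\ref{i:abk_suba}: for $n=1$, that lemma says $\pk_1$ is the coarsest partition whose block-sum span contains the subalgebra generated by $\hat K$; in particular $\hat K$ lies in the span of the $\hat L$, $L\in\pk_1$, and comparing coefficients on group elements forces each block of $\pk_1$ to lie inside $K$ or be disjoint from it. For the step, if $K$ is a union of blocks of $\pk_n$ then $\hat K=\sum_{L\subseteq K}\hat L$ lies in the subalgebra generated by the block sums of $\pk_n$, so by \cref{l:ab_ba_char}~\ref{i:abk_suba} it lies in the block-sum span of $\pk_{n+1}=CI(\pk_n)$, and again $K$ is a union of blocks of $\pk_{n+1}$.

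For the forward implication I would assume a supercharacter theory $(\px,\pk)$ with $K\in\pk$ exists and show $\pk\preceq\pk_n$ for all $n$. Since $\{K\}$ is a subcollection of $\pk$, the subalgebra generated by $\hat K$ is contained in the one generated by all class sums $\hat{K'}$, $K'\in\pk$; by \cref{l:bk_elprops}~\ref{i:bk_subalg} and the order-reversing bijection of \cref{l:partt_subalgs} this gives $I(\pk)\preceq I(\{K\})$, and applying the order-preserving map $C$ (\cref{l:ax_elprops}~\ref{i:ax_refine}) yields $CI(\pk)\preceq\pk_1$. But $CI(\pk)=\pk$ because $(\px,\pk)$ is a supercharacter theory (\cref{t:k_sct}~\ref{i:k_abk}), so $\pk\preceq\pk_1$; iterating, using monotonicity of $I$ (\cref{l:bk_elprops}~\ref{i:bk_refine}) and of $C$ together with the fixed-point identity $CI(\pk)=\pk$, gives $\pk\preceq\pk_n$ for every $n$. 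Now $K$ is a block of $\pk$, hence contained in a single block of the coarser $\pk_n$; combined with the previous paragraph ($K$ a union of blocks of $\pk_n$) this forces $K$ to equal that block, i.e.\ $K\in\pk_n$.

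The converse is immediate: if $K\in\pk_n$ for all $n$ then $K\in\pk_\infty$, and $(I(\pk_\infty),\pk_\infty)$ is the desired supercharacter theory. Finally, the forward argument applied to \emph{any} supercharacter theory class partition $\pttf{L}\ni K$ shows $\pttf{L}\preceq\pk_n$, in particular $\pttf{L}\preceq\pk_\infty$; since $\pk_\infty$ itself contains $K$, it is the coarsest such partition, which gives the last claim. The dual statement for $X\subseteq\Irr G$ follows verbatim after swapping the roles of $C$ and $I$ and the corresponding lemmas (\cref{l:ab_ba_char}~\ref{i:bax_suba}, \cref{c:ab_ba_ref}~\ref{i:bax_x}, and \cref{t:x_sct}). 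I expect the main obstacle to be the bookkeeping in the forward implication: correctly tracking that the single-set input $\{K\}$ (not a partition) still feeds the monotonicity machinery, and cleanly combining "$\pk\preceq\pk_n$" with "$K$ is a union of blocks" to upgrade $K$ from a union of blocks to a single block.
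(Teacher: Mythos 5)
Your proof is correct and takes essentially the same route as the paper: the paper's own proof is just the citation ``This follows from \cref{c:refine_to_sct} and \cref{l:ab_ba_char}'', and your argument is precisely the expansion of that citation --- the union-of-blocks induction via \cref{l:ab_ba_char}~\ref{i:abk_suba} together with the monotone iteration and stabilization that underlie \cref{c:refine_to_sct}. You have merely supplied the bookkeeping (tracking $K$ as a union of blocks, and the refinement $\pk \preceq (CI)^n(\{K\})$ for any supercharacter theory with $K\in\pk$) that the paper leaves to the reader.
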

\begin{proof}
  This follows from \cref{c:refine_to_sct}
  and \cref{l:ab_ba_char}.
\end{proof}

In the following, we use the notations 
$\noids{G}:= G \setminus \{1\}$
and $\noids{\Irr}G:= \Irr G \setminus \{1\}$.

\begin{algo}\label{algo:allscts}
  To compute the set $\mathcal{S}$ 
  of all supercharacter theories of a given group $G$
  with known character table,
  do the following steps:  
  \begin{enumerate}[label=\arabic*.]
  \item For every nonempty, $G$-invariant subset 
        $S \subset \noids{G}$, such that $S$ contains at most
        half of the nontrivial conjugacy classes of $G$,
        do the following:
        compute the partitions 
        $I(\{S\})$, $CI(\{S\})$, $\dotsc$,
        until either $S\notin (CI)^n(\{S\})$ for some $n$,
        or the number of blocks in these partitions stabilizes. 
        In the latter case, we have found the coarsest supercharacter 
        theory, $\pttf{L}(S)$, which has $S$ as 
        one of its superclasses.
        Add each such $\pttf{L}(S)$ to the set $\mathcal{S}$.
  \item Form all possible meets of two or more members
        of $\mathcal{S}$
        (in the lattice of supercharacter theories)
        and add them to $\mathcal{S}$.
  \item Add the coarse supercharacter theory 
       \[
          \mathsf{M}(G)
          := \big( \big\{\, \{1\},\, \noids{\Irr} G  
                   \,\big\}
                   , \;
                   \big\{\, \{1\},\, \noids{G}
                  \, \big\}
             \big)
       \]
       to $\mathcal{S}$.
  \end{enumerate}
\end{algo}

To compute the meet of some supercharacter theories
$(\px_i, \pk_i)$, 
we first compute the meet
$\pk:=\bigwedge_i \pk_i$ in the partition lattice,
and then use \cref{c:refine_to_sct}
to refine $\pk$ to a supercharacter theory.

Also in practice, one works only on the character table,
and represents $G$-invariant subsets $S\subset G$
by a list of class positions.

\begin{proof}[Proof of correctness of \cref{algo:allscts}]
  Let $\pk = \{S_1, \dotsc, S_r\}$ be a partition of the set $G$
  into disjoint, nonempty sets, and assume that $r\geq 2$.
  Then, in the partition lattice, $\pk$ equals the meet
  \[ \pk = \{S_1,\, G \setminus S_1\} \wedge \dotsb \wedge
           \{S_{r-1},\, G\setminus S_{r-1}\}.                           
  \]
  Notice that we have omitted exactly one set $S_r$ here.
  
  Now let $(\px, \pk)$ be a supercharacter theory, 
  and write $\pk= \{S_1,\allowbreak\, \dotsc,\allowbreak\, S_r\}$.
  Then $\{1\}\in \pk$, say $S_1=\{1\}$.
  Assume that $\pk \neq \mathsf{M}(G)$, that is, 
  $r=\abs{\pk}\geq 3$.
  Clearly, we have 
  $\pk \preceq \pttf{L}(S_i) \preceq \{S_i, G\setminus S_i\}$
  for each $S_i\in \pk$.
  It follows that
  \[  \pk = \pttf{L}(S_1) \wedge \dotsb \wedge
            \pttf{L}(S_{r-1}).
  \]
  (This is true both in the partition lattice and 
  in the lattice of supercharacter theories.)
  There is at most one $S_i\in \pk$ that contains
    more than half of the nontrivial conjugacy classes of $G$,
  and we may assume that this is $S_{r}$.
  Since every supercharacter theory refines 
  $\pttf{L}(\{1\}) = \mathsf{M}(G)$,
  we can omit $\pttf{L}(\{1\})$ from the above meet.
  It follows that $\pk$ is a meet of partitions
  $\pttf{L}(S_i)$ that were found in the first step.
  Thus the algorithm works.
\end{proof}

Let us compare this algorithm to the one 
proposed by Hendrickson~\cite[A.9]{Hendrickson08}.
Hendrickson first determines certain 
\emph{good} subsets of $G$,
by running through all nonempty,
$G$-invariant subsetes of $G$.
In a second step, Hendrickson determines 
all partitions into good subsets belonging to
supercharacter theories.

A dual version of Hendrickson's algorithm was proposed by 
Lamar~\cite{Lamar18}, building on ideas used by
Burkett, Lamar, Lewis and Wynn~\cite{BurkettLLW17}
to show that $G=\operatorname{Sp}(6,2)$ 
has only two supercharacter theories.
Here one uses \emph{good} subsets of $\Irr G$ 
and \emph{admissible} partial partitions of $\Irr G$.

It follows from \cref{l:ab_ba_char} that
a subset $K$ of $G$ is good in Hendrickson's sense
if and only if $K \in CI(\{K\})$,
and that a subset $X \subseteq \Irr G$ is good in the sense
of Burkett et al.~\cite{BurkettLLW17} if and only if 
$X \in IC(\{X\})$. 
The original definitions were different, and the resulting algorithms 
to decide whether some subset is good are less effective.
For example, to check whether a subset $X\subseteq \Irr G$ is good,
Burkett et al. check whether the characters $\chi(1)\chi$ and
$\psi(1)\psi $ occur with the same coefficients in the powers
$\sigma_X^k$ for all $\chi$, $\psi\in X$.

If $K\subseteq G$ is a superclass in some supercharacter theory,
then $K$ is necessarily good, but not conversely.
The first step of \cref{algo:allscts} decides directly if
a certain normal subset
$S\subseteq G$ can be a superclass, and if so, finds the coarsest 
supercharacter theory of $G$ that contains $S$ as a superclass.
In particular, the first step 
of our algorithm finds all possible superclasses
of supercharacter theories,
except the trivial superclass $\noids{G}$
and possibly some large superclasses
containing more than $(k(G)-1)/2$ conjugacy classes.
(In principle, it is even possible that such a superclass
is not contained in any supercharacter theory
determined in the first step, but then it has to be contained
in an even larger superclass.)

Of course, for some sets $S$, we will perhaps need more
steps until we reach a supercharacter theory or see that $S$ is not a 
superclass.
As mentioned before, usually very few steps suffice,
but I do not understand how to predict this.

A variant of the second step in \cref{algo:allscts}
would be to try all possible refinements of the
superclass partitions found in the first step,
using only possible superclasses.
However,
it is often the case that the second step yields no
or only few new supercharacter theories.

Of course, \cref{algo:allscts}
is still rather naive. 
For example, many sets $S$ will lead to the same supercharacter
theory $\pttf{L}(S)$.
One possible speedup is to use automorphisms
of the character table (see below).


\section{Table automorphisms}
\label{sec:tableauts}

A \defemph{table automorphism} of the character table
of some finite group $G$ is a pair
$(\sigma,\tau)$, where $\sigma$ is a permutation of
$\Irr G$, and $\tau$ a permutation of $\Cl(G)$,
the conjugacy classes of $G$, and such that
$\chi^{\sigma}(g^{\tau}) = \chi(g)$
for all $\chi\in \Irr G$ and $g\in G$
(where $g^{\tau}$ denotes an element in $(g^G)^{\tau}$).
Since the character table as a matrix is invertible,
it follows that $\sigma$ and $\tau$ determine
each other uniquely, when $(\sigma,\tau)$
is a table automorphism. 
The set of all table automorphisms of a given character table
forms a group, which acts on $\Irr G$ and on $\Cl(G)$.
(To prevent possible confusion, we remark here that in the
computer algebra system GAP~\cite{GAP}, 
the attribute \texttt{AutomorphismsOfTable}
gives only those column permutations of a character table 
that preserve power maps, in particular element orders.)

By Brauer's permutation lemma~\cite[Theorem~6.32]{isaCTdov},
the number of irreducible characters fixed
by $\sigma$ equals the number of classes fixed by
$\tau$ when $(\sigma,\tau)$ is a table automorphism.

The next lemma is the common generalization of two constructions
of supercharacter theories described in the introduction of
the Diaconis-Isaacs paper~\cite{DiaconisIsaacs08}.
We record it here for the sake of completeness
and reference below.
\begin{lemma}\label{l:sctfromtabauts} 
If $A$ is a subgroup of table automorphisms,
then the partitions of $\Irr G$ and $\Cl(G)$ into orbits under $A$
form a supercharacter theory.
\end{lemma}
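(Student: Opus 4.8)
The plan is to verify the two defining conditions of a supercharacter theory (conditions (a) and (b) in the \hyperref[def:sct]{definition} from the introduction) directly, where $\px$ is the partition of $\Irr G$ into $A$-orbits and $\pk$ is the partition of $G$ into $A$-orbits of conjugacy classes. Note first that $\pk$ really is a $G$-invariant partition of $G$: each $A$-orbit of classes is a union of conjugacy classes, hence a normal subset of $G$. For condition (a), $\abs{\px}=\abs{\pk}$, I would combine Brauer's permutation lemma with orbit counting. For each $a=(\sigma,\tau)\in A$, Brauer's permutation lemma gives that the number of $\chi\in\Irr G$ fixed by $\sigma$ equals the number of classes fixed by $\tau$. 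Averaging these equalities over $a\in A$ by the Cauchy--Frobenius (Burnside) orbit-counting formula yields that the number of $A$-orbits on $\Irr G$ equals the number of $A$-orbits on $\Cl(G)$, that is, $\abs{\px}=\abs{\pk}$.

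The key preliminary for condition (b) is that a table automorphism $(\sigma,\tau)$ fixes the identity class and preserves character degrees. I would argue that the identity class is the unique class whose column consists entirely of positive real numbers: for $g\neq 1$ the vanishing of the regular character, $\sum_{\chi}\chi(1)\chi(g)=0$, together with $\chi(1)>0$ forbids all the values $\chi(g)$ from being positive, whereas the identity column is $(\chi(1))_{\chi}$. Since the defining relation $\chi^{\sigma}(g^{\tau})=\chi(g)$ shows that the column at $g^{\tau}$ is a permutation of the column at $g$, this all-positive property is $\tau$-invariant, so $\tau$ fixes $\{1\}$. Setting $g=1$ in $\chi^{\sigma}(g^{\tau})=\chi(g)$ then gives $\chi^{\sigma}(1)=\chi(1)$, so degrees are preserved.

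For condition (b) I must show that each $\sigma_X$ (for $X\in\px$) is constant on each block $K\in\pk$. I would take $g,h$ in one block, so that there is some $a=(\sigma,\tau)\in A$ with $h\in(g^{G})^{\tau}$; since characters are class functions, $\chi(h)=\chi(g^{\tau})=\chi^{\sigma^{-1}}(g)$. Because $X$ is an $A$-orbit it is $\sigma^{-1}$-invariant, so substituting $\psi=\chi^{\sigma^{-1}}$ (a bijection of $X$) and using degree preservation $\chi(1)=\psi^{\sigma}(1)=\psi(1)$ gives
\[
  \sigma_X(h)=\sum_{\chi\in X}\chi(1)\,\chi^{\sigma^{-1}}(g)
            =\sum_{\psi\in X}\psi(1)\,\psi(g)=\sigma_X(g).
\]
Hence $\sigma_X$ is constant on $K$, which is condition (b). Together with (a), this shows $(\px,\pk)$ is a supercharacter theory.

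The only real subtlety is the degree-preservation step; once it is in hand, both the orbit count and the reindexing are routine. An alternative would be to avoid the definition and instead check $\px\preceq\Irpt(\pk)$ — the same reindexing shows $\chi(\hat{K})=\psi(\hat{K})$ for $\chi,\psi$ in one orbit and $K$ a $\tau$-invariant union of classes — and then invoke condition~\ref{i:scomegasw} together with $\abs{\px}=\abs{\pk}$; but this route still rests on degree preservation and is no shorter.
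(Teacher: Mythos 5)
Your proof is correct and follows essentially the same route as the paper's: establish that table automorphisms fix the identity class (hence preserve degrees), deduce that orbit sums $\sigma_X$ are constant on $A$-orbits of classes by reindexing, and obtain $\abs{\px}=\abs{\pk}$ from Brauer's permutation lemma. You merely fill in details the paper leaves implicit, namely the all-positive-column argument for why $\tau$ fixes the identity class and the Cauchy--Frobenius averaging that converts equal fixed-point counts into equal orbit counts.
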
 
\begin{proof} 
Namely, every table automorphism fixes the class of 
the identity of $G$ (and the trivial character),
and thus every $\chi$ in an orbit $X$
has the same degree.
It is easy to see that the sum of the characters in an orbit
$X$ is constant along orbits of $A$ on $\Cl(G)$.
By Brauer's permutation lemma,
there are equal number of $A$-orbits on $\Irr G$ and
on $\Cl(G)$.
\end{proof}

\begin{lemma}
  Let $(\sigma,\tau)$ be a table automorphism of 
  the character table of $G$.
  \begin{enums}
  \item $\Clpt(\px^{\sigma}) = \Clpt(\px)^{\tau}$
        for any collection $\px$ of subsets of $\Irr G$.
  \item $\Irpt(\pk^{\tau})= \Irpt(\pk)^{\sigma}$
        for any collection $\pk$ of 
        $G$-invariant subsets of $G$.
  \item If $(\px,\pk)$ is a supercharacter theory of $G$,
        then so is $(\px^{\sigma},\pk^{\tau})$.
  \end{enums}
\end{lemma}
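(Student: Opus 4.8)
The plan is to reduce parts~(a) and~(b) to two dual ``transformation laws'' that describe how the characters $\sigma_X$ and the central-character values $\chi(\hat{K})/\chi(1)$ behave under the table automorphism $(\sigma,\tau)$, and then to recover~(c) from part~(a) together with the characterization of supercharacter theories in \cref{t:superch}. Everything rests on the observation that the defining equivalence relations of $\Clpt$ and $\Irpt$ are ``transported'' by $\tau$ and $\sigma$ respectively.

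First I would record the two elementary facts about $(\sigma,\tau)$ that make the reindexings below exact: that degrees are preserved, $\chi^{\sigma}(1)=\chi(1)$, and that $\tau$ preserves class sizes, $\abs{C^{\tau}}=\abs{C}$ for every conjugacy class $C$. The first holds because $\tau$ fixes the identity class (as already used in the proof of \cref{l:sctfromtabauts}), so that $\chi^{\sigma}(1)=\chi^{\sigma}(1^{\tau})=\chi(1)$. The second follows from column orthogonality: the quantity $\sum_{\chi}\abs{\chi(g)}^{2}=\abs{C_G(g)}$ is invariant under simultaneously applying $\sigma$ to the rows and $\tau$ to the columns of the character table, so $\abs{C_G(g^{\tau})}=\abs{C_G(g)}$ and hence $\abs{C^{\tau}}=\abs{C}$.

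With these in hand the heart of the argument is the pair of dual identities
\[
  \sigma_{X^{\sigma}}(g^{\tau})=\sigma_X(g)
  \quad\text{and}\quad
  \frac{\chi^{\sigma}(\widehat{K^{\tau}})}{\chi^{\sigma}(1)}
     =\frac{\chi(\hat{K})}{\chi(1)},
\]
valid for all $X\subseteq\Irr G$, $g\in G$, $\chi\in\Irr G$ and all $G$-invariant $K\subseteq G$. Each is a one-line computation: writing a character of $X^{\sigma}$ as $\psi^{\sigma}$ with $\psi\in X$ and using $\psi^{\sigma}(1)=\psi(1)$ together with the defining relation $\psi^{\sigma}(g^{\tau})=\psi(g)$ yields the first; for the second I would expand $\chi^{\sigma}(\widehat{K^{\tau}})=\sum_{C\subseteq K}\abs{C^{\tau}}\,\chi^{\sigma}(g_C^{\tau})$ as a sum over the classes $C$ making up $K$, then apply $\abs{C^{\tau}}=\abs{C}$ and $\chi^{\sigma}(g_C^{\tau})=\chi(g_C)$ to recover $\chi(\hat{K})$, and divide by $\chi^{\sigma}(1)=\chi(1)$. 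Part~(a) is then formal: the first identity gives $g^{\tau}\sim_{\px^{\sigma}}h^{\tau}$ if and only if $g\sim_{\px}h$, so the relation defining $\Clpt(\px^{\sigma})$ is precisely the $\tau$-image of the one defining $\Clpt(\px)$ (recall $\tau$ is a bijection of classes, so every element is of the form $g^{\tau}$); hence $\Clpt(\px^{\sigma})=\Clpt(\px)^{\tau}$. Part~(b) is identical, using the second identity and that $\sigma$ is a bijection of $\Irr G$, and appealing to \cref{def:IrPt}.

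Finally, part~(c) is immediate from part~(a): since $\sigma$ and $\tau$ are bijections we have $\abs{\px^{\sigma}}=\abs{\px}=\abs{\pk}=\abs{\pk^{\tau}}$, and if $\pk=\Clpt(\px)$ then (a) gives $\Clpt(\px^{\sigma})=\Clpt(\px)^{\tau}=\pk^{\tau}$, which is exactly condition~\ref{i:scdef} of \cref{t:superch} for the pair $(\px^{\sigma},\pk^{\tau})$. I expect no genuine obstacle in this lemma, as the substance is entirely the two reindexing identities. The only point demanding care is the preliminary step: one must verify that degrees and class sizes really are preserved by $(\sigma,\tau)$, since it is exactly these invariances that make the two displayed identities hold on the nose rather than merely up to the permutations $\sigma$ and $\tau$.
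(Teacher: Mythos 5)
Your proof is correct and takes essentially the same route as the paper: the paper's own proof simply states that parts (a) and (b) ``follow easily from the definitions'' and that (c) is ``clear in view of \cref{t:superch}'', and your two transport identities for $\sigma_{X^{\sigma}}$ and $\chi^{\sigma}(\widehat{K^{\tau}})/\chi^{\sigma}(1)$ are precisely the details behind that assertion. Your preliminary verification that degrees and class sizes are preserved is a genuinely necessary step (class-size invariance is what makes the identity for $\widehat{K^{\tau}}$ exact), so spelling it out is a worthwhile addition rather than a deviation.
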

\begin{proof}
  The first and second item follows easily from the definitions.
  The last one is then clear in view of \cref{t:superch}.
\end{proof}

A special class of table automorphisms is induced 
by Galois automorphisms. 
Indeed, if $\sigma$ is a field automorphism of $\compl$,
then there is some integer $r$ coprime to $\abs{G}$
such that $(\sigma,\tau)$ is a table automorphism,
where $\tau$ is induced by the permutation
$g \mapsto g^r$.

\begin{lemma}
   Let $(\sigma,\tau)$ be a table automorphism
   of the character table of $G$,
   where $\sigma$ is induced by some field automorphism.
  \begin{enums}
  \item \label{i:gal_a} $\Clpt(\px^{\sigma}) = \Clpt(\px)$
        for any collection $\px$ of subsets of\/ $\Irr G$.
  \item \label{i:gal_b} $\Irpt(\pk^{\tau})= \Irpt(\pk)$
        for any collection $\pk$ of 
        $G$-invariant subsets of $G$.
  \item \cite[Theorem~2.2(e),(f)]{DiaconisIsaacs08}
        \label{i:gal_sct} 
        If $(\px,\pk)$ is a supercharacter theory of $G$,
        then $(\px^{\sigma},\pk^{\tau})= (\px,\pk)$.
  \end{enums}   
\end{lemma}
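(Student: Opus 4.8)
The plan is to reduce all three assertions to two pointwise identities and then invoke the injectivity of $\sigma$ together with the characterization of supercharacter theories in \cref{t:superch}. Throughout I write $\sigma$ both for the field automorphism of $\compl$ and for the permutation it induces on $\Irr G$, so that $\chi^{\sigma}(g)=\sigma(\chi(g))$ for all $\chi\in\Irr G$ and $g\in G$; in particular $\sigma$ fixes every degree $\chi(1)\in\ints$. First I would record the two identities. For $X\subseteq\Irr G$ and $g\in G$,
\[ \sigma_{X^{\sigma}}(g)=\sum_{\chi\in X}\chi^{\sigma}(1)\,\chi^{\sigma}(g)
      =\sum_{\chi\in X}\chi(1)\,\sigma(\chi(g))=\sigma\bigl(\sigma_X(g)\bigr). \]
Dually, combining the defining identity of a table automorphism, $\chi^{\sigma}(g^{\tau})=\chi(g)$, with $\chi^{\sigma}=\sigma\circ\chi$ yields $\chi(g^{\tau})=\sigma^{-1}(\chi(g))$ for all $\chi$ and $g$. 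Since $\tau$ is realised by the power map $g\mapsto g^{r}$, which bijects $G$ and carries $K$ onto $K^{\tau}$, summing over $K$ gives, for any $G$-invariant $K$,
\[ \chi(\widehat{K^{\tau}})=\sum_{g\in K}\chi(g^{\tau})=\sigma^{-1}\bigl(\chi(\hat{K})\bigr). \]

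For part~(a) I would argue that, because $\sigma$ is a bijection of $\compl$, the condition $\sigma_{X^{\sigma}}(g)=\sigma_{X^{\sigma}}(h)$ for all $X\in\px$ is equivalent, via the first identity, to $\sigma(\sigma_X(g))=\sigma(\sigma_X(h))$ for all $X$, hence to $\sigma_X(g)=\sigma_X(h)$ for all $X$. Thus the equivalence relation defining $\Clpt(\px^{\sigma})$ is literally the one defining $\Clpt(\px)$, so $\Clpt(\px^{\sigma})=\Clpt(\px)$. Part~(b) is dual: since $\chi(1)$ is $\sigma^{-1}$-fixed, the second identity gives $\chi(\widehat{K^{\tau}})/\chi(1)=\sigma^{-1}\bigl(\chi(\hat{K})/\chi(1)\bigr)$, and injectivity of $\sigma^{-1}$ shows that the relation defining $\Irpt(\pk^{\tau})$ coincides with that defining $\Irpt(\pk)$. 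Both statements hold for arbitrary collections, as claimed.

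For part~(c) I would use \cref{t:superch}, by which a supercharacter theory $(\px,\pk)$ satisfies $\pk=\Clpt(\px)$ and $\px=\Irpt(\pk)$. Summing the first display with the weights $\chi(1)$ gives $\sigma_X(g^{\tau})=\sigma^{-1}(\sigma_X(g))$, so the relation $\sim_{\px}$ on $G$ is $\tau$-invariant; hence $\tau$ permutes the blocks of $\Clpt(\px)=\pk$, i.e.\ $\pk^{\tau}=\pk$. Symmetrically, $\chi^{\sigma}(\hat{K})/\chi^{\sigma}(1)=\sigma\bigl(\chi(\hat{K})/\chi(1)\bigr)$ shows that $\sim_{\pk}$ on $\Irr G$ is $\sigma$-invariant, so $\sigma$ permutes the blocks of $\Irpt(\pk)=\px$, i.e.\ $\px^{\sigma}=\px$. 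The only step requiring real care is the derivation of $\chi(g^{\tau})=\sigma^{-1}(\chi(g))$: one must keep straight that it is $\sigma^{-1}$, not $\sigma$, that appears, and that $\tau$ acts on the group side while $\sigma$ acts on values and on characters. Once that identity is in hand, everything else is a routine transcription of the definitions of $\Clpt$ and $\Irpt$.
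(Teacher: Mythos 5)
Your proposal is correct and takes essentially the same approach as the paper: both rest on the identities $\sigma_{X^{\sigma}}=\sigma\circ\sigma_X$ and $\alpha_{\widehat{K^{\tau}}}=\sigma^{\pm 1}\circ\alpha_{\widehat{K}}$, from which (a) and (b) follow because composing with an injective field automorphism does not change the associated equivalence relations, and (c) then follows via \cref{t:superch}. The only difference is cosmetic: you derive $\chi(g^{\tau})=\sigma^{-1}(\chi(g))$ directly from the table-automorphism axiom (the paper, using the power-map description of $\tau$, writes $\chi(g^{r})=\chi(g)^{\sigma}$, i.e.\ $\sigma$ in place of $\sigma^{-1}$), and this choice of inverse is immaterial since either map is injective.
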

\begin{proof}
  Let $X\in \px$.
  We have 
  $\sigma_{X^{\sigma}} = \sigma\circ \sigma_X $
  as function $G\to\compl$.
  This shows~\ref{i:gal_a}.
  
  Similarly, the partition $\Irpt(\pk^{\tau})$
  is defined by maps
  $\alpha_{\widehat{K^{\tau}}}\colon \Irr G \to \compl$.
  We have
  \[ \alpha_{\widehat{K^{\tau}}}(\chi)
     = \frac{\chi(\widehat{K^{\tau}})}{\chi(1)}
     = \frac{1}{\chi(1)} \sum_{g\in K} \chi(g^r)
     = \frac{1}{\chi(1)} \sum_{g\in K} \chi(g)^{\sigma}
     = \alpha_{\widehat{K}}(\chi)^{\sigma},
  \]
  and so 
  $ \alpha_{\widehat{K^{\tau}}} 
    = \sigma\circ \alpha_{\widehat{K}} $.
  This yields~\ref{i:gal_b},
  and \ref{i:gal_sct} follows
  from~\ref{i:gal_a} and \ref{i:gal_b}.
\end{proof}

The last two results can be used to speed up
\cref{algo:allscts}.


\section{Discussion}
\label{sec:nrscts}

\ctable[ caption = {Number of supercharacter theories %
                    for some projective special linear groups
                    $G= \PSL(n,q)$%
                   },
         label = tab:psl2,
    ]{rrr@{\hspace{1em}}lrr}{
    }{   \FL
       &   &  &    &  \multicolumn{2}{c}{Supercharacter theories}  \NN
   $n$ & $q$ & $k(G)$ & Aut(CT($G$)) & \qquad all & from table auts \ML
  2 & 7 & 6 & $C_2$ & 4 & 2 \NN
    & 8 & 9 & $C_3\times C_3$ & 7 & 4 \NN 
    &  9 & 7 & $C_2\times C_2$ & 7 & 4 \NN
    & 11 & 8 & $C_2\times C_2$ & 13 & 4 \NN 
    & 13 & 9 & $C_6$ & 13 & 4 \NN 
    & 16 & 17 & $C_8\times C_4$ & 33 & 12 \NN 
    & 17 & 11 & $C_6\times C_2$ & 25 & 8 \NN 
    & 19 & 12 & $C_6\times C_2$ & 34 & 8 \NN 
    & 23 & 14 & $C_{10}\times C_2$ & 41 & 8 \NN 
    & 25 & 15 & $C_6\times C_2\times C_2$ & 81 & 16 \NN 
    & 27 & 16 & $C_6\times C_6$ & 45 & 16 \NN 
    & 29 & 17 & $C_{12}\times C_2$ & 89 & 12 \NN 
    & 31 & 18 & $C_{4}\times C_4\times  C_2$ & 161 & 18 \NN 
    & 37 & 21 & $C_{18}\times C_3$ & 76 & 12 \NN 
    & 41 & 23 & $C_{12}\times C_2 \times C_2$ & 307 & 24 \NN 
    & 43 & 24 & $C_{30}\times C_2$ & 100 & 16 \ML
  3 &  3 & 12 & $ C_4 \times C_2 $ & 7 & 6 \NN
    & 4  & 10 & $S_3 \times C_2 \times C_2 $ & 23 & 20 \NN 
    & 7  & 22 & $ S_3 \times C_{12} \times C_2 $ & 121 & 60 \LL
}

\ctable[ caption = Number of supercharacter theories %
                   for some alternating groups,
         label = tab:alt,
    ]{lrcrr}{
    }{   \FL
            &  &    &  \multicolumn{2}{c}{Supercharacter theories}  \NN
    $G$     & $k(G)$ & Aut(CT($G$)) & \qquad all & from table auts \ML
    $A_4 $ &  4 & $C_2$ & 3 & 2  \NN
    $A_5 $ &  5 & $C_2$ & 3 & 2  \NN 
    $A_6$  &  7 & $C_2\times C_2$ & 7 & 4 \NN
    $A_7$  &  9 & $C_2 $          & 3 & 2 \NN 
    $A_8$  & 14 & $C_2\times C_2$ & 5 & 4 \NN 
    $A_9$  & 18 & $C_2\times C_2$ & 5 & 4 \NN 
    $A_{10}$ & 24 & $C_2\times C_2$ & 5 & 4 \LL
}

\ctable[ caption = Number of supercharacter theories %
                   for some sporadic simple groups,
         label = tab:spor,
    ]{lrcrr}{\tnote{ $M_k$: Mathieu groups,  
                     $J_k$: Janko groups, 
                     $HS$: Higman-Sims group,
                     $McL$: McLaughlin group.}
    }{   \FL 
            &  &    &  \multicolumn{2}{c}{Supercharacter theories}  \NN
   $G$     & $k(G)$ & Aut(CT($G$)) & \qquad all & from table auts \ML
  $M_{11}$  & 10     & $ C_2 \times C_2 $ & 5  & 4  \NN
  $M_{12}$  & 15     & $ C_2 \times C_2 $ & 5  & 4  \NN
  $M_{22}$  & 12     & $ C_2 \times C_2 $ & 5  & 4  \NN
  $M_{23}$  & 17     & $ (C_2)^4 $        & 17 & 16 \NN
  $M_{24} $ & 26     & $ (C_2)^3 $        & 9  & 8  \ML
  $ J_1  $  & 15     & $ C_6$             & 5  & 4  \NN
  $J_2$     & 21     & $ C_2 $            & 3  & 2  \NN
  $J_3$     & 21     & $ C_6 \times C_2 \times C_2 $ & 17 & 16 \ML
  $HS$      & 24     & $ (C_2)^3 $        & 9  & 8  \NN
  $McL$     & 24     & $ (C_2)^4 $        & 17 & 16 \LL
}

    By using an implementation of \cref{algo:allscts}
    into the computer algebra system GAP,
    all supercharacter theories
    for some nonabelian simple groups were computed. 
    There is always the coarsest supercharacter theory~$\mathsf{M}(G)$,
    and there are the supercharacter theories
    that can be constructed from a subgroup~$A$
    of the table automorphism group, as described in
    \cref{l:sctfromtabauts}.
    (For $A=\{1\}$, this includes the 
    other trivial supercharacter theory.)
    For a number of simple groups, these are actually all 
    supercharacter theories. 
    This includes all the sporadic simple groups with $26$
    or fewer conjugacy classes
    (the five Mathieu groups, the Janko groups $J_1$, $J_2$,
    $J_3$, the Higman-Sims group $HS$ and the 
    McLaughlin group $McL$),
    the alternating groups $A_n$ with $n\leq 10$ and $n\neq 6$,
    the Tits group $T$ and the exceptional group $G_2(3)$,
    and the unitary groups $U_n(q)$ with
    $(n,q) = (3,3)$, $(3,4)$, $(3,5)$, $(4,2)$, $(4,3)$.  
    (In particular, $J_2$ and $U_4(2)$ have exactly $3$ supercharacter 
    theories.)
    On the other hand, for the projective special linear groups,
    there are usually more supercharacter theories.
    \cref{tab:psl2} contains the number of 
    supercharacter theories of $\PSL(n,q)$ for some values of
    $q$ and $n=2$, $3$ (second last column).
    The last column indicates how many supercharacter theories
    can be obtained from a subgroup~$A$ 
    of the table automorphism group $\Aut(\operatorname{CT}(G))$, 
    as in \cref{l:sctfromtabauts}.
    Another simple group with nontrivial supercharacter theories
    not coming from table automorphisms is the Suzuki group
    $\Sz(8)$ which has $11$ different supercharacter theories,
    of which only $8$ come from table automorphisms.

    These tables suggest a number of problems and conjectures:
    \begin{conjecture}
        For all $n\geq 7$, 
        every supercharacter theory $ \neq \mathsf{M}(G)$
        of the alternating group~$A_n$
        can be constructed from a group 
        of table automorphisms.
    \end{conjecture}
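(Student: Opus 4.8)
The plan is to recast the conjecture through \cref{t:superch} and then attack the resulting problem about the class algebra of $A_n$. By \cref{t:k_sct}, a supercharacter theory of $G=A_n$ is the same as a $G$-invariant partition $\pk$ whose class sums $\hat K$ ($K\in\pk$) span a unital subalgebra of $\Z(\compl A_n)$, i.e.\ a fusion of the conjugacy-class basis of the center into a central Schur ring. On the other side, by \cref{l:sctfromtabauts} the theories ``coming from table automorphisms'' are exactly the orbit partitions of the subgroups $A$ of the table automorphism group $T:=\Aut(\operatorname{CT}(A_n))$. For $n\neq 6$ the group $T$ is generated by the outer (parity) automorphism induced from $S_n$ together with the Galois automorphisms, and the first step is to verify that $T$ acts nontrivially only on the pairs of classes and of characters that split on passing from $S_n$ to $A_n$ (the cycle types with distinct odd parts, and the self-conjugate partitions $\lambda=\lambda'$). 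So the conjecture asserts: apart from the coarse theory $\mathsf{M}(A_n)$, every central Schur ring on $A_n$ is a fusion compatible only with this split-class symmetry.

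First I would prove a rigidity statement on the character side. By \cref{l:zcg_irrg} and \cref{c:gensubalg}, the finest data available are the central-character values $\omega_\chi(\hat C)=|C|\,\chi(g_C)/\chi(1)$. Using the standard character $\eta=\chi_{(n-1,1)}\!\downarrow_{A_n}$ of degree $n-1$, whose value on a class is its number of fixed points minus one, together with a few further low-degree constituents of the permutation characters on $k$-subsets, I would separate conjugacy classes by cycle type. The key sub-lemma is combinatorial and number-theoretic: for $n\ge 7$, two classes (resp.\ two irreducible characters) carry identical central-character data for this small generating set if and only if they are equal or form one of the split pairs. Via the hook-length formula one shows that degree coincidences among the $\chi_\lambda$ arise only from $\lambda\leftrightarrow\lambda'$, and the explicit values of $\eta$ pin down the rest. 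This identifies the finest nontrivial fusion precisely as the parity-orbit partition, matching the $A=\{1\}$ and $A=\langle\text{parity}\rangle$ entries behind \cref{tab:alt}.

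The genuinely hard part will be to pass from the finest fusions to all of them: knowing the atoms of the lattice does not bound the intermediate central Schur rings, and this is where I expect the main obstacle. The most promising route is to exploit positivity of the class structure constants $a_{ij}^k\in\nats$: if a fusion merges two classes $C,C'$ that are not a split pair, I would try to show that well-definedness of the collapsed structure constants forces a cascade of further merges, collapsing $\pk$ all the way to $\mathsf{M}(A_n)$. Combined with \cref{c:find_superclass} and the meet description in the proof of \cref{algo:allscts} (every theory is a meet of the coarsest theories $\pttf{L}(S)$ through single superclasses $S$), it would then suffice to show that each admissible superclass $S$ is a union of $T$-orbits on classes, whence $\pttf{L}(S)$, and every meet, is a table-automorphism theory. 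A natural supporting tool is an induction on $n$ through the embedding $A_{n-1}<A_n$ and the branching rules, requiring a central Schur ring to restrict compatibly.

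I expect no purely formal argument to close the gap: there is no general classification of fusions of the conjugacy-class scheme, and the rigidity of $A_n$ at the level of structure constants seems to require new combinatorial input about its class algebra. The most delicate points are the ``large'' superclasses containing more than half the classes, and the genuine irrationalities carried by the self-conjugate split classes, where the parity and Galois symmetries interact. The explicit verification for $7\le n\le 10$ recorded in \cref{tab:alt} would serve as the base of any such induction.
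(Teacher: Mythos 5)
You should first note that the statement you were asked to prove is not a theorem of the paper at all: it is stated as an open \emph{conjecture}, supported only by machine computation (via \cref{algo:allscts}) for $A_n$ with $n\leq 10$ as recorded in \cref{tab:alt}, and by the remark that it is equivalent to an unproved conjecture of Lamar. So there is no proof in the paper to compare against, and your proposal --- which you yourself describe as a program rather than an argument --- does not close the gap either. The central missing step is exactly the one you flag: even if your rigidity sub-lemma identified the atoms of the lattice of supercharacter theories (the finest nontrivial fusions) as the parity-orbit theories, this gives no control over the intermediate central Schur rings; there is no general principle by which the atoms of this lattice determine all its elements, and the hoped-for ``cascade of merges'' from positivity of the structure constants $a_{ij}^k$ is pure speculation with no mechanism behind it.

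Two further steps in your outline are genuinely flawed rather than merely incomplete. First, the claim that degree coincidences among the irreducible characters of $A_n$ arise only from $\lambda\leftrightarrow\lambda'$ is unsupported: accidental dimension coincidences among $\chi_\lambda$ do occur for symmetric and alternating groups, and the hook-length formula by itself rules out none of them; in any case \cref{l:sctfromtabauts} shows all characters in one block of a table-automorphism theory share a degree, so degree separation alone cannot characterize which fusions are orbit partitions. Second, your final reduction is circular and, even granted its hypothesis, insufficient: showing that every admissible superclass $S$ is a union of orbits of $T=\Aut(\operatorname{CT}(A_n))$ is essentially a restatement of the conjecture, and even then it would only show that $\pttf{L}(S)$ is \emph{coarser} than the $T$-orbit theory --- being coarser than an orbit partition does not make a theory the orbit theory of some subgroup $A\leq T$ (the coarse theory $\mathsf{M}(G)$ itself is the standing counterexample, which is precisely why the conjecture excludes it). What your proposal does buy is a correct reformulation: by \cref{t:k_sct} the conjecture is a rigidity statement about central Schur rings on $A_n$, and by the meet decomposition in the proof of correctness of \cref{algo:allscts} it would suffice to understand the coarsest theories $\pttf{L}(S)$ through a single superclass. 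But turning that reformulation into a proof requires new combinatorial input about the class algebra of $A_n$ that neither you nor the paper supplies; the statement remains open.
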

    
    In view of the specifics of the representation theory
    of $A_n$ and $S_n$, this is equivalent to 
    a conjecture by J. Lamar~\cite[Conjecture~3.35]{Lamar18}.
    
    Closely related is the following conjecture:
    \begin{conjecture}
       For every $n\geq 7$, the symmetric group~$S_n$
       has exactly $4$ supercharacter theories.       
    \end{conjecture}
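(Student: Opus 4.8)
Write $O=S_n\setminus A_n$ for the set of odd permutations. For $n\geq 7$ the group $S_n$ has $A_n$ as its only proper nontrivial normal subgroup, has trivial outer automorphism group, has only rational irreducible characters, and (as is readily checked) a trivial group of table automorphisms. I would first verify that the following four supercharacter theories exist and form a chain under refinement: (1) the finest theory $(\Irr S_n,\Cl S_n)$; (2) the theory with superclasses $\{1\}$, the nontrivial even classes, and $O$; (3) the theory with superclasses $\{1\}$, $A_n\setminus\{1\}$, and $O$; and (4) the coarsest theory $\mathsf{M}(S_n)$. Theories (1) and (4) exist for any group. For (3), the span of $1,\widehat{A_n},\widehat{S_n}$ is a subalgebra of $\Z(\compl S_n)$ because $A_n\leq S_n$, so by \cref{t:k_sct} the partition $\{\{1\},A_n\setminus\{1\},O\}$ belongs to a supercharacter theory; this is the theory produced from the normal subgroup $A_n$ as in the Example of \cref{sec:ClPtIrPt}. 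Theory (2) is the subtle one, coming neither from a normal subgroup nor, the table-automorphism group being trivial, from \cref{l:sctfromtabauts}. Here the decisive identity is that $\hat C\,\widehat{A_n}=\abs{C}\,\widehat{A_n}$ for every class $C\subseteq A_n$, whence $\hat C\,\widehat{O}=\abs{C}\,\widehat{O}$ and $\widehat{O}^{\,2}=\abs{A_n}\,\widehat{A_n}$; together with closure of the even class sums under multiplication this shows that the span of $1$, the nontrivial even class sums, and $\widehat{O}$ is a subalgebra, so (2) is a supercharacter theory by \cref{t:k_sct}. The four are distinct and linearly ordered whenever $n\geq 7$.

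\textbf{Reduction of uniqueness.} The whole content is that there are no further theories, and here I would use the mechanism behind \cref{algo:allscts}: every supercharacter theory is a meet of the coarsest theories $\pttf{L}(S)$ having a prescribed superclass $S$. Since the four theories above form a chain, their meets produce nothing new, so it suffices to prove that $\pttf{L}(S)$ is one of the four for every $G$-invariant $S\subseteq S_n\setminus\{1\}$. Equivalently, by \cref{t:k_sct}, one must show that the only $G$-invariant partitions $\pk$ of $S_n$ whose class sums span a subalgebra of $\Z(\compl S_n)$ are the four listed. The natural dividing line is the sign character $\operatorname{sgn}$: the trivial character is always a block of $\Irpt(\pk)$ by \cref{l:bk_elprops}~\ref{i:bk_one}, so $e_{1_{S_n}}=\abs{S_n}^{-1}\widehat{S_n}$ always lies in $\operatorname{span}\{\hat K\}$, and the relation $\widehat{O}=\abs{A_n}\,(e_{1_{S_n}}-e_{\operatorname{sgn}})$ shows that $O$ is a union of superclasses if and only if $\{\operatorname{sgn}\}$ is a singleton block of $\Irpt(\pk)$. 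Thus $\pk$ refines the even/odd partition precisely when $\operatorname{sgn}$ is not merged with any other irreducible, and I would split into these two cases.

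\textbf{The two cases.} In the even/odd-refining case every superclass lies in $A_n\setminus\{1\}$ or in $O$, and the even superclasses form an $S_n$-invariant partition of $A_n\setminus\{1\}$ whose class sums span a subalgebra of the even part $\Z(\compl S_n)\cap\compl A_n$. This is exactly the datum of an $S_n$-stable supercharacter theory of $A_n$, and the task is to show that the only possibilities are ``all even classes separate'' (giving (1) or (2), according to whether $O$ is split or kept whole) and ``$A_n\setminus\{1\}$ a single block'' (giving (3)). In the remaining case some superclass has mixed parity, $\operatorname{sgn}$ is genuinely merged, and one must show the theory collapses to $\mathsf{M}(S_n)$; here the grading automorphism $\gamma$ of $\Z(\compl S_n)$ sending $g$ to $\operatorname{sgn}(g)\,g$ (an algebra automorphism fixing exactly the even/odd-refining theories) should control how far a mixed superclass can be resolved before the generated subalgebra shrinks to $\operatorname{span}\{1,\widehat{S_n}\}$.

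\textbf{Main obstacle.} The crux is the even/odd-refining case: ruling out every intermediate $S_n$-invariant partition of the even classes. This is the assertion that the even class algebra of $S_n$ carries no exotic Schur ring, and it is precisely the point of contact with the alternating-group conjecture above, hence with Lamar's conjecture; a classification of the $S_n$-stable supercharacter theories of $A_n$ would settle it. This is where the fine representation theory of $S_n$ must enter: through the separation properties of the content statistic, that is, the spectrum of the class sum of transpositions, which governs how that element generates the center, and through the splitting behaviour of the restrictions $\chi^{\lambda}\big|_{A_n}$. I expect this uniform generation/separation statement to be the real difficulty, and the reason the assertion remains only conjectural; by contrast the construction of the four theories and the even/odd dichotomy are routine.
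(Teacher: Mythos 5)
This statement is one of the paper's open conjectures: the paper gives no proof of it at all, only computational evidence obtained from \cref{algo:allscts} for small cases and the remark that it is closely tied to Lamar's conjecture on $A_n$. So the only question is whether your proposal closes that gap, and --- as you yourself concede in your final paragraph --- it does not. What you have is correct but partial. The construction of the four theories is sound: your verification of theory (2) via $\hat C\,\widehat{A_n}=\abs{C}\,\widehat{A_n}$, hence $\hat C\,\widehat{O}=\abs{C}\,\widehat{O}$ and $\widehat{O}^{\,2}=\abs{A_n}\,\widehat{A_n}$, does show that the relevant span is a unital subalgebra of $\Z(\compl S_n)$, so \cref{t:k_sct} applies; moreover theories (2) and (3) are exactly the two theories the paper attributes to the normal subgroup $A_n$ via Hendrickson's constructions. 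The reduction of uniqueness to the coarsest theories $\pttf{L}(S)$ (via the correctness proof of \cref{algo:allscts}), and the even/odd dichotomy via $\widehat{O}=\abs{A_n}\,(e_{1}-e_{\operatorname{sgn}})$, are also correct.

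The genuine gap is your ``two cases'' step, which is the entire content of the conjecture. In the even/odd-refining case you must show that the only $S_n$-invariant partitions of the nontrivial even classes whose class sums generate (together with $1$ and $\widehat{O}$) a subalgebra are the finest one and the single block $A_n\setminus\{1\}$ --- that is, that the even class algebra carries no exotic Schur ring --- and in the mixed-parity case you must show the theory collapses to $\mathsf{M}(S_n)$. For neither do you give an argument: the appeal to the spectrum of the transposition class sum and to the grading automorphism $\gamma$ is a plausible direction, not a proof, and no step of it is carried out; it is not even clear these tools suffice, since the analogous uniqueness question for $A_n$ (the paper's preceding conjecture, equivalent to Lamar's) is itself open. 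A smaller unproved assertion: you claim ``as is readily checked'' that $S_n$ has trivial table automorphism group for $n\geq 7$, but never check it; fortunately nothing essential rests on it. In short, your write-up is an honest and essentially correct reduction of the conjecture to its known hard core, but it is not a proof, and the paper contains none to compare it with.
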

    
    (The normal subgroup $A_n$ of $S_n$ yields
     $2$ more supercharacter theories besides the trivial ones
     via known constructions~\cite{Hendrickson12}.)
     
    \begin{problem}
       Give a generic construction of some supercharacter theory
       of $\PSL(n,q)$ (or $\Sz(q)$) other than supercharacter theories
       from orbits of table automorphisms and the trivial
       supercharacter theory.       
    \end{problem}
    
    The last problem can of course be asked for any other class of
    groups.
    In fact, the original motivation of the theory was to construct
    a supercharacter theory for the unitriangular groups
    $\UT(n,q)$, where the full character table is not available,
    and Diaconis and Isaacs constructed more generally a supercharacter
    theory of \emph{algebra groups}~\cite{DiaconisIsaacs08}.
    Since then, similar constructions have been given for many
    other unipotent
    groups~\cite{AndreFreitasNeto15,Andrews15,Panov18,Thiem18}.
     
    When $G$ is not simple, then there are several constructions
    of supercharacter theories
    using normal subgroups~\cite{Aliniaeifard17,Hendrickson12},
    and table automorphisms yield also supercharacter theories.
    Moreover, one may form meets and joins of these
    supercharacter theories.
    So in general, it may be difficult to decide whether
    some proposed construction of supercharacter theories
    yields something that can not be obtained from the 
    other available methods.

\printbibliography

\end{document}